\title{Marked and labelled Gushel--Mukai fourfolds}
\author{Emma Brakkee and Laura Pertusi}
\date{\today}
\DeclareMathOperator{\tM}{M}
\DeclareMathOperator{\tO}{O}
\newcommand{\stO}{\widetilde{\tO}}
\DeclareMathOperator{\modulo}{mod}
\renewcommand{\mod}{\:\modulo \:}
\DeclareMathOperator{\rk}{rk}
\newcommand{\mD}{\mathcal{D}}
\newcommand{\mM}{\mathcal{M}}
\newcommand{\mO}{\mathcal{O}}
\DeclareMathOperator{\Disc}{Disc}
\DeclareMathOperator{\id}{id}
\DeclareMathOperator{\HH}{H}
\DeclareMathOperator{\MathOpHom}{Hom}
\renewcommand{\hom}{\MathOpHom}
\DeclareMathOperator{\MathOpKer}{Ker}
\renewcommand{\ker}{\MathOpKer}
\DeclareMathOperator{\im}{Im}
\newcommand{\Z}{\mathbb{Z}}
\newcommand{\Q}{\mathbb{Q}}
\newcommand{\C}{\mathbb{C}}
\newcommand{\p}{\mathbb{P}}
\newcommand{\G}{\mathbb{G}}
\newcommand{\Gr}{\text{Gr}}
\newcommand{\Ku}{\text{Ku}}
\newcommand{\D}{\text{D}^b}
\DeclareMathOperator{\mar}{mar}
\DeclareMathOperator{\lab}{lab}
\DeclareMathOperator{\br}{Br}
\DeclareMathOperator{\tors}{tors}
\DeclareMathOperator{\ns}{NS}
\DeclareMathOperator{\pic}{Pic}
\DeclareMathOperator{\prim}{pr}
\DeclareMathOperator{\stab}{Stab}
\newcommand{\sheafhom}{\mathscr{H}\kern -.5pt om}
\def\dual{\smash{\raisebox{-0.1em}{\scalebox{.7}[1.4]{\rotatebox{90}{\textnormal{\guilsinglleft}}}}}}
\theoremstyle{plain}
\newtheorem{theorem}{Theorem}[section]
\newtheorem{proposition}[theorem]{Proposition}
\newtheorem{lemma}[theorem]{Lemma}
\newtheorem{cor}[theorem]{Corollary}
\theoremstyle{definition}
\newtheorem{definition}[theorem]{Definition}
\newtheorem{remark}[theorem]{Remark}
\newtheorem*{plan}{Plan of the paper}
\newtheorem*{notation}{Notation}
\newtheorem*{ack}{Acknowledgements}
\begin{document}

\maketitle
\begin{abstract}
We prove that the moduli stacks of marked and labelled Hodge-special Gushel--Mukai fourfolds are isomorphic.
As an application, we construct rational maps from the stack of Hodge-special Gushel--Mukai fourfolds of discriminant $d$ to the moduli space of (twisted) degree-$d$ polarized K3 surfaces. We use these results to prove a counting formula for the number of $4$-dimensional fibers of Fourier--Mukai partners of very general Hodge-special Gushel--Mukai fourfolds with associated K3 surface, and a lower bound for this number in the case of a twisted associated K3 surface.
\end{abstract}

\section{Introduction}
In the last 20 years the study of cubic fourfolds has been a central research topic, due for instance to their rich associated hyperk\"ahler geometry and the still open question concerning whether they are rational or irrational. One foundational work is \cite{Hassett}, where Hassett studied special cubic fourfolds, i.e.\ cubic fourfolds containing a surface which is not a complete intersection. Special cubic fourfolds form divisors in the moduli space of cubic fourfolds parametrized by a positive even integer $d$ called the discriminant. Moreover, depending on the value of 
$d$, the cubic fourfold is related to a degree-$d$ polarized K3 surface via Hodge theory.

In order to study this relation on the level of period domains and moduli spaces, Hassett introduced the notions of marked and labelled special cubic fourfolds. Depending on $d$, the moduli space of discriminant $d$ marked cubic fourfolds is either isomorphic to or a two-to-one covering of the moduli space of discriminant $d$ labelled cubic fourfolds. Moreover, if $d$ is such that an associated K3 surface exists, this is used to construct an either generically injective or degree-two rational map from the moduli space of degree-$d$ polarized K3 surfaces to the divisor of discriminant-$d$ special cubic fourfolds. This difference was further investigated in \cite{BrakkeeTwoK3}, where the geometry of the covering involution arising in the second case is completely described.

\medskip
In this paper, we deal with similar questions in the case of Gushel--Mukai fourfolds. These are smooth Fano fourfolds obtained generically as quadric sections of linear sections of the Grassmannian $\Gr(2,5)$, and they share many similarities with cubic fourfolds.  After defining marked and labelled GM fourfolds of discriminant $d$ and their associated moduli stacks in Section \ref{section-markedlabelled}, we show that they provide equivalent notions in this case. 

\begin{theorem}[Corollary \ref{IsoModStacks}]
\label{thm_isomodstacks}
The moduli stacks of labelled and marked Hodge-special GM fourfolds are isomorphic.
\end{theorem}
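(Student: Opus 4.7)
My strategy would be to show that the natural forgetful morphism
\[
\pi\colon \mM^{\mar}_d \longrightarrow \mM^{\lab}_d, \qquad (X,\phi) \longmapsto (X,\phi(K_d)),
\]
is an equivalence of stacks. Essential surjectivity is built into the definitions, so the real content is full faithfulness. Working fibrewise, two markings $\phi,\phi'$ of a fixed labelled Hodge-special GM fourfold $(X,K)$ represent isomorphic objects of $\mM^{\mar}_d$ precisely when $\phi^{-1}\circ\phi' \in \tO(K_d,\Lambda_0)$ lies in the image of the natural map $\mathrm{Aut}(X,K) \to \tO(K_d,\Lambda_0)$. Here $\Lambda_0 \subset K_d$ denotes the distinguished rank-$2$ sublattice generated by the two Grassmannian classes that are algebraic on every GM fourfold. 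Thus the theorem reduces to showing that this map is surjective for every Hodge-special GM fourfold $X$.

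A short lattice computation then pins down the target: since $K_d$ has rank $3$ and $\Lambda_0$ has rank $2$, any element of $\tO(K_d,\Lambda_0)$ is determined by its action on the rank-$1$ quotient $K_d/\Lambda_0$, so acts there as $\pm 1$. Hence $\tO(K_d,\Lambda_0)$ is either trivial or cyclic of order $2$, generated by a single reflection $\tau$ fixing $\Lambda_0$ pointwise. It therefore suffices to exhibit, for every Hodge-special GM fourfold $X$ with labelling $K$, an automorphism of $X$ preserving $K$ and inducing $\tau$ on $K_d$.

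This last step is the main obstacle and where the specific geometry of GM fourfolds enters. The key input is the canonical involution available on every smooth GM fourfold---for instance the duality involution coming from the associated double EPW sextic, or equivalently from an involution on the Kuznetsov component $\Ku(X)$---whose action on $\HH^4(X,\Z)$ is the identity on $\Lambda_0$ and multiplication by $-1$ on its orthogonal in the primitive cohomology. Restricting this involution to any $K \supset \Lambda_0$ produces exactly $\tau$, supplying the required preimage in $\mathrm{Aut}(X,K)$. The delicate point is the compatibility statement itself: matching the lattice-theoretic reflection $\tau$ with the action of the geometric involution on the rank-$2$ Grassmannian sublattice and on the distinguished class in $K_d \setminus \Lambda_0$. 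Once this compatibility is established, the fibres of $\pi$ collapse to a point and $\pi$ is an isomorphism of stacks, which is the content of the theorem.
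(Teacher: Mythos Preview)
Your proposal has a genuine gap at the geometric step. You need, for every Hodge-special GM fourfold $X$, an automorphism of $X$ that fixes the two Grassmannian classes and realises the nontrivial element $\tau\in\tO(K_d,\Lambda_0)$. No such automorphism is available: a general ordinary GM fourfold has trivial automorphism group. The ``duality involution'' you invoke is an involution of the period domain (the $r_{\mD}$ of the paper, interchanging $\mD_d'$ and $\mD_d''$ when $d\equiv 2\bmod 8$), not an automorphism of $X$; and the autoequivalence of $\Ku(X)$ that acts by the desired sign on the Mukai lattice (cf.\ the paper's Lemma~\ref{lemma_inverseorientation} and the remark after it) is not induced by an automorphism of the fourfold. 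So the surjectivity of $\mathrm{Aut}(X,K)\to\tO(K_d,\Lambda_0)$ that your argument rests on is simply false in general.

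The paper avoids this entirely by working on period domains rather than on individual fourfolds. The moduli stacks are \emph{defined} as $\mM_4\times_{\mD}\mD_d^{\mar}$ and $\mM_4\times_{\mD}\mD_d^{\lab}$, so it suffices to show that $\mD_{L_d}^{\mar}=\Omega(L_d^{\perp})/H(L_d)\to\mD_{L_d}^{\lab}=\Omega(L_d^{\perp})/G(L_d)$ is an isomorphism. Your lattice observation that $G'(L_d)\cong G(L_d)/H(L_d)\cong\Z/2\Z$ is correct and is exactly Lemma~\ref{lemma_descrofG'}. The paper extracts one more piece of information from the same computation: the generator acts as $-\id$ on $\Disc(L_d)$. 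By Nikulin's gluing, it therefore lifts to an element of $\tO(\Lambda)$ restricting to $-\id$ on $L_d^{\perp}$. Since $-\id$ acts trivially on the projective variety $\Omega(L_d^{\perp})$, the two quotients coincide. This is a purely lattice-theoretic argument; no automorphism of $X$ is ever needed, and the obstruction that blocks your approach never arises.
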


Like for cubic fourfolds, this result is particularly interesting when we specify to GM fourfolds with Hodge-associated K3 surfaces, as defined in \cite{debarre_iliev_manivel_2015}. Recall that a GM fourfold has an associated K3 surface if and only if its discriminant satisfies a certain numerical condition $\eqref{eq_astast}$ -- see Sections \ref{section_HspGM}
and \ref{DefRationalMap}. For these values of the discriminant, applying Theorem \ref{thm_isomodstacks}, we interpret the condition of having an associated K3 surface on the level of moduli stacks as follows.

\begin{theorem}
\label{cor_ratmap}
Let $d$ be a positive integer satisfying condition $\eqref{eq_astast}$. Then there exists a dominant rational map defined in \eqref{eq_ratmap} from the moduli stack of Hodge-special GM fourfolds with discriminant $d$ to the moduli space of degree-$d$ polarized K3 surfaces that sends a GM fourfold to a Hodge-associated K3 surface.
\end{theorem}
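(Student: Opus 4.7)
The plan is to construct the map first at the level of the marked moduli stack, where the K3-period data is directly accessible from the marking, then pass to the labelled stack via Theorem~\ref{thm_isomodstacks}, and finally descend to the moduli stack $\mM_d$ of Hodge-special discriminant-$d$ GM fourfolds by inverting the natural forgetful morphism from labelled to unlabelled.

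On the marked side, a Hodge-special fourfold $X$ of discriminant $d$ carries a fixed isometric embedding of a reference rank-$2$ sublattice $K_d$ into the relevant cohomology lattice of $X$. Under condition $\eqref{eq_astast}$, the orthogonal complement $K_d^{\perp}$ in this ambient lattice is abstractly isometric to a twist of the primitive lattice $\HH^2_{\prim}(S,\Z)$ of a degree-$d$ polarized K3 surface. Transporting the Hodge structure of $X$ through the marking endows this abstract lattice with a polarized weight-two Hodge structure of K3 type; by surjectivity of the K3 period map together with the global Torelli theorem for polarized K3 surfaces, this Hodge structure determines a unique isomorphism class $(S,H)$ in the moduli space of degree-$d$ polarized K3 surfaces. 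Performing this construction in families yields a morphism from the marked moduli stack to the moduli of degree-$d$ polarized K3 surfaces. By Theorem~\ref{thm_isomodstacks}, this is equivalently a morphism from the labelled stack, and composing with a rational inverse of the forgetful morphism to $\mM_d$ gives the rational map of the statement, which by construction sends a GM fourfold to a Hodge-associated K3 surface.

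Dominance would follow from a comparison of period data: the period domain for marked Hodge-special discriminant-$d$ GM fourfolds and the period domain for degree-$d$ polarized K3 surfaces are both Type IV domains, and the lattice construction above induces an isomorphism between them. Combined with surjectivity of the K3 period map, this shows the rational map is dominant.

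The main obstacle I anticipate is the last descent step: one must check that the forgetful morphism from the labelled stack to $\mM_d$ admits a genuine rational inverse, rather than only a multi-valued correspondence. Equivalently, a very general Hodge-special fourfold of discriminant $d$ should admit essentially a unique label (up to automorphisms already quotiented out in $\mM_d$), or different labels on the same fourfold should yield isomorphic associated K3 surfaces. This is exactly where Theorem~\ref{thm_isomodstacks} is crucial: identifying marked with labelled ensures that the K3-period, which is a priori determined only by the finer marking datum, in fact descends to the labelling and hence gives a well-defined rational map on the Hodge-special stack.
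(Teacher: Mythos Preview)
Your overall strategy matches the paper's: identify $L_d^{\perp}(-1)$ with the primitive K3 lattice $\Lambda_d$ under \eqref{eq_astast}, compare period domains, and use the marked--labelled equivalence to pass to $\mD_d$. However, you have misdiagnosed what Theorem~\ref{thm_isomodstacks} actually buys you, and this leaves a genuine gap.

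Under the lattice isomorphism $L_d^{\perp}\cong\Lambda_d(-1)$, the group $\stO(\Lambda_d)$ is identified with $H(L_d)$, so the K3 period quotient $\Omega(\Lambda_d)/\stO(\Lambda_d)$ is naturally isomorphic to $\mD_{L_d}^{\mar}=\Omega(L_d^{\perp})/H(L_d)$. Theorem~\ref{thm_isomodstacks} (equivalently Proposition~\ref{MainResult}) then identifies this with $\mD_{L_d}^{\lab}$. That is the only role of the marked--labelled theorem: it shows the K3 period map, which a priori lives on the \emph{marked} space, is already defined on the \emph{labelled} space. It says nothing about the further descent from labelled to unlabelled. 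For that you need the separate fact (Lemma~\ref{lemma_mapnu}) that $\nu\colon\mD_{L_d}^{\lab}\to\mD_{L_d}$ is a normalization, hence birational; this is your option ``a very general fourfold admits a unique label,'' and it is not a consequence of Theorem~\ref{thm_isomodstacks}. Your final paragraph collapses these two distinct steps into one.

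A few further points you should address. First, the reference sublattice is the rank-$3$ lattice $L_d\supset\Lambda_G$, not a rank-$2$ lattice. Second, when $d\equiv 2\bmod 8$ the locus $\mD_d=\mD_d'\cup\mD_d''$ has two irreducible components, and the resulting rational map $\mD_d\dashrightarrow\Omega(\Lambda_d)/\stO(\Lambda_d)$ is generically two-to-one rather than birational; your argument ignores this. Third, the target $\Omega(\Lambda_d)/\stO(\Lambda_d)$ parametrizes \emph{quasi}-polarized K3 surfaces, and to land in $\tM_d$ you must restrict away from $\mD_d\cap\mD_8$.
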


As an application, we can count fibers of the period map for GM fourfolds whose elements are Fourier--Mukai partners. By \cite{kuznetsov_perry} the bounded derived category of a GM fourfold $X$ has a semiorthogonal decomposition of the form
\begin{equation*}
\D(X)= \langle \Ku(X), \mathcal{O}_X, \mathcal{U}_X^*, \mathcal{O}_X(1), \mathcal{U}_X^*(1)\rangle,    
\end{equation*}
where $\mathcal{U}_X^*$ is the restriction to $X$ of the tautological rank-$2$ vector bundle on $\Gr(2,5)$ and $\Ku(X)$, defined as the orthogonal complement to the exceptional collection $\mathcal{O}_X, \mathcal{U}_X^*, \mathcal{O}_X(1), \mathcal{U}_X^*(1)$, is a subcategory of K3 type. We say that a GM fourfold $X'$ is a Fourier--Mukai partner of $X$ if there is an equivalence $\Ku(X) \xrightarrow{\sim} \Ku(X')$ of Fourier--Mukai type. As shown in \cite[Theorem 4.4]{debarre_iliev_manivel_2015} the period map of GM fourfolds has smooth $4$-dimensional fibers, so we cannot expect a finite number of Fourier--Mukai partners as in the case of K3 surfaces \cite{BriMac} or cubic fourfolds \cite[Theorem 1.1]{Huy}. Nevertheless, Theorem \ref{cor_ratmap} allows to prove a counting formula to the number of period points of Fourier--Mukai partners for very general GM fourfolds with Hodge-associated K3 surface. See \cite{Oguiso} and \cite{Pert1} for the analogous statements for K3 surfaces and cubic fourfolds, respectively.

\begin{theorem}[Proposition \ref{prop_FMpGM}]
\label{thm_FMp}
Let $X$ be a very general Hodge-special GM fourfold with discriminant $d$ satisfying \eqref{eq_astast}. Let $m$ be the number of non-isomorphic Fourier--Mukai partners of its Hodge-associated K3 surface. Then when $d \equiv 4 \mod 8$ (resp.\ $d \equiv 2 \mod 8$), there are $m$ (resp.\ $2m$) fibers of the period map of GM fourfolds such that, when non-empty, their elements are Fourier--Mukai partners of $X$. Moreover, all Fourier--Mukai partners of $X$ are obtained in this way.
\end{theorem}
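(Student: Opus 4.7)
My plan is to reduce the count of Fourier--Mukai partners of $X$ (at the level of Kuznetsov components) to the count of Fourier--Mukai partners of its Hodge-associated K3 surface $S$, using the rational map $\pi$ from Theorem \ref{cor_ratmap}.

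The first step is a Hodge-theoretic characterisation of FM partners of $X$. A Fourier--Mukai equivalence $\Ku(X)\simeq\Ku(X')$ induces a Hodge isometry of the corresponding Mukai lattices, which are of K3 type. For $X$ very general in the Hodge-special locus with Hodge-associated K3 $S$, I would show that this Hodge isometry descends to a Hodge isometry between the transcendental lattices of $S$ and of the Hodge-associated K3 surface $S'$ of $X'$. By the derived Torelli theorem for K3 surfaces this is equivalent to $S'$ being a Fourier--Mukai partner of $S$. Very generality of $X$ is used to conclude that Hodge isometries of the Mukai lattices are determined by their transcendental parts. Conversely, the Kuznetsov component of a Hodge-special GM fourfold with Hodge-associated K3 is controlled up to FM equivalence by its Mukai Hodge structure, so every FM partner of $S$ among K3 surfaces lifts to a Kuznetsov-level FM equivalence with any GM fourfold realising that Hodge-associated K3.

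The second step is a lattice computation for $\pi$. By Theorem \ref{thm_isomodstacks} we may work with marked GM fourfolds, so a period point mapping to $[S']\in\mathcal{M}^{K3}_d$ corresponds to an $\stO$-orbit of primitive embeddings of a fixed rank-$2$ lattice of discriminant $d$ into the GM fourfold lattice whose orthogonal complement carries the prescribed K3 Hodge structure. A standard argument with discriminant forms---parallel to the analysis for cubic fourfolds in \cite{Hassett} and \cite{BrakkeeTwoK3}---should show that the number of such orbits over a very general $[S']$ equals $1$ if $d\equiv 4\mod 8$ and $2$ if $d\equiv 2\mod 8$; in the latter case the two preimages are exchanged by the natural covering involution.

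Combining the two steps, Oguiso's formula \cite{Oguiso} provides $m$ non-isomorphic FM partners of $S$, which pull back under $\pi$ to $m$ (respectively $2m$) period points of Hodge-special GM fourfolds when $d\equiv 4\mod 8$ (resp.\ $d\equiv 2\mod 8$), each corresponding to a single $4$-dimensional fiber of the GM period map; some of these fibers may be empty because the image of the period map need not be all of $\mathcal{M}^{K3}_d$, which is the content of the ``when non-empty'' clause, and the converse (every FM partner of $X$ has period in this list) is given by the first step. I expect the first step to be the main obstacle: passing from a general FM equivalence of Kuznetsov components to an isomorphism of transcendental Hodge structures of the Hodge-associated K3 surfaces requires controlling the algebraic rank-$2$ sublattice of the Mukai lattice of $\Ku(X)$, identifying its orthogonal complement with the transcendental lattice of $S$, and using very generality of $X$ to rule out additional Hodge isometries.
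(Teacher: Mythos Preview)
Your overall strategy matches the paper's: use the rational map of Theorem~\ref{cor_ratmap} (which is birational onto its image for $d\equiv 4\bmod 8$ and generically two-to-one for $d\equiv 2\bmod 8$), invoke Oguiso's count for FM partners of the associated K3, and pull back. The paper's proof is essentially this, so your outline is on the right track. Two points deserve correction.

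First, your ``first step'' treats the passage between Hodge isometry and categorical equivalence too loosely, and this is where the actual content lies. In the converse direction (every FM partner of $X$ maps to an FM partner of $S$), your argument via Lemma~\ref{lemma_eqinduceisometry}, transcendental lattices, and derived Torelli is fine and is what the paper does. But in the \emph{forward} direction---showing that a GM fourfold $X'$ whose period sits over some $(S_i,l_i)$ is an FM partner of $X$---a Hodge isometry of Mukai lattices is not enough: you need an honest equivalence $\Ku(X')\simeq\D(S_i)$. The paper gets this by quoting \cite[Theorem~3.6]{Pert2} and \cite[Theorem~1.9]{PPZ}, which together give the categorical equivalence $\Ku(X)\simeq\D(S)$ whenever $(S,l)$ is Hodge-associated to $X$. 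Your sentence ``the Kuznetsov component \dots\ is controlled up to FM equivalence by its Mukai Hodge structure'' is precisely this deep input, not a routine consequence of very generality; it should be cited, not argued. Once you have $\Ku(X')\simeq\D(S_i)\simeq\D(S)\simeq\Ku(X)$, the forward direction is immediate and there is no need to pass through transcendental lattices at all.

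Second, your ``second step'' is both already done and slightly misstated. The degree of the rational map over a very general point of $\tM_d$ was computed in constructing \eqref{eq_ratmap} (see \eqref{eq_rationalmaponD} and the discussion after it), so no new lattice computation is required here. Moreover, the sublattice being embedded is the rank-$3$ lattice $L_d\supset\Lambda_G$, not a rank-$2$ lattice; you may be thinking of the cubic fourfold situation, where the analogue of $\Lambda_G$ has rank~$1$.
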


We end with proving the analogue of Theorem \ref{cor_ratmap} for GM fourfolds with associated twisted K3 surface. Recall that by \cite[Theorem 1.1]{Pert2} this is equivalent to having discriminant of the form $d'=dr^2$ with $d$ satisfying $(\ast\ast)$ -- see Section \ref{section_GMvstwistedK3}. On the other hand, the moduli space of polarized twisted K3 surfaces with fixed degree and order was recently constructed in \cite{BrakkeeTwistedK3}. 

\begin{theorem}[Corollary \ref{RatMapTwisted}]
\label{thm_ratmaptwisted}
Let $d'$ be a positive integer such that a very general GM fourfold of discriminant $d'$ admits an associated polarized twisted K3 surface of degree $d$ and order $r$. There is a dominant rational map from the moduli stack of Hodge-special GM fourfolds of discriminant $d'$ to a component of the moduli space of twisted K3 surfaces of degree $d$ and order $r$, sending a GM fourfold of discriminant $d'$ to an associated twisted K3 surface.
\end{theorem}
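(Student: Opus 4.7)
The strategy is to follow the template of Theorem \ref{cor_ratmap}, replacing the moduli space of polarised K3 surfaces of degree $d$ with a component of the moduli space of polarised twisted K3 surfaces of degree $d$ and order $r$ from \cite{BrakkeeTwistedK3}. By Theorem \ref{thm_isomodstacks} it suffices to construct the rational map from the moduli stack of \emph{marked} Hodge-special GM fourfolds of discriminant $d'$, check that it descends through the isomorphism with the labelled stack, and identify the image inside a single component of the twisted moduli space.

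To construct the map I would start from a marked Hodge-special GM fourfold $X$ of discriminant $d' = dr^2$. The marking provides a fixed isometry of the relevant reference lattice with the integral structure on the transcendental lattice of $\Ku(X)$. The characterisation of \cite[Theorem 1.1]{Pert2} recalled in Section \ref{section_GMvstwistedK3} then yields, from this data, a canonical primitive sublattice of rank two with discriminant $d'$ whose orthogonal complement carries a Hodge structure of K3 type together with a distinguished element of order $r$ in its discriminant group. These are exactly the data of a polarised twisted K3 period point of degree $d$ and order $r$, and by surjectivity of the twisted period map together with the Torelli theorem for twisted K3 surfaces, they determine a point in a fixed component of the moduli space of \cite{BrakkeeTwistedK3}.

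For descent to the labelled stack, I would verify that replacing the marking by any other marking defining the same labelling modifies the chosen rank-two sublattice only by an isometry fixing the labelling class and acts on the orthogonal complement by an isometry preserving both the Hodge structure and the Brauer class up to the equivalence used in \cite{BrakkeeTwistedK3}. The resulting morphism then transfers, via Theorem \ref{thm_isomodstacks}, to a morphism from the moduli stack of Hodge-special GM fourfolds of discriminant $d'$. Dominance is obtained as in Theorem \ref{cor_ratmap}: the very general point of the twisted K3 period domain arises, via \cite{Pert2} and the global description of GM periods, as the transcendental periods of some Hodge-special GM fourfold of discriminant $d'$.

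The main obstacle I anticipate is pinning down the correct connected component of the twisted K3 moduli space and matching the orientation/sign data that distinguishes its components with the combinatorial choices built into a marking. Concretely, one must show that the $B$-field (or lift of the Brauer class) produced from a marking of $X$ depends only on the labelling, and compatibly with the equivalence relation imposed on twisted K3 sides in \cite{BrakkeeTwistedK3}; once this is under control, the rest of the argument is a direct transcription of the untwisted proof of Theorem \ref{cor_ratmap}.
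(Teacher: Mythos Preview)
Your overall plan---mirror the untwisted construction of Theorem \ref{cor_ratmap} at the level of period domains---is the same as the paper's, but two concrete ingredients are missing and one step is garbled.

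First, the sentence ``a canonical primitive sublattice of rank two with discriminant $d'$ whose orthogonal complement carries a Hodge structure of K3 type together with a distinguished element of order $r$ in its discriminant group'' does not describe anything that actually appears. What the paper uses instead is a fixed lattice isometry $L_{d'}^{\perp}(-1)\cong T_w$ for a specific $w\in\hom(\Lambda_d,\Z/r\Z)$; the existence of such a $w$ and such an isometry is exactly the content of Corollary~\ref{EquivAssociated}, and it is this choice that singles out the component $\tM_w$. Once this isometry is fixed, the map is built purely on period domains: $\Omega(L_{d'}^{\perp})\cong\Omega(T_w)$, then quotient by $H(L_{d'})\cong\stO(T_w)\subset\stab(w)$ to land in $\Omega(T_w)/\stab(w)\supset\tM_w$, and invert the birational normalization $\nu$. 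No appeal to ``surjectivity of the twisted period map'' or a separate ``Torelli for twisted K3 surfaces'' is needed, and your anticipated obstacle about pinning down the component and the $B$-field lift disappears: the component is $\tM_w$ by construction, and the ambiguity in markings is absorbed by the group $\stab(w)$.

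Second, you never explain why the image of $X$ is an \emph{associated} twisted K3 surface, i.e.\ why the Hodge isometry $L_{d'}^{\perp}\cong T_w\subset\widetilde{\HH}(S,\alpha,\Z)$ on transcendental parts extends to a Hodge isometry $\widetilde{\HH}(\Ku(X),\Z)\cong\widetilde{\HH}(S,\alpha,\Z)$. This is not automatic: it requires that $\tO(S_w)\to\tO(\Disc S_w)$ be surjective, which is Lemma~\ref{OrthGpSurj} in the paper and is the genuinely new lattice-theoretic input beyond the untwisted case. Without it, the map you build sends $X$ to some twisted K3 period, but not one you can certify is associated to $X$ in the sense of \cite{Pert2}.
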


Finally, as in the untwisted setting, we apply Theorem \ref{thm_ratmaptwisted} to study Fourier--Mukai partners of a very general GM fourfold with associated twisted K3 surface.
\begin{theorem}[Proposition \ref{prop_FMptwistedcase}]
\label{thm_FMptwisted}
Let $d'$ be a positive integer such that a very general GM fourfold of discriminant $d'$ admits an associated polarized twisted K3 surface $(S,\alpha)$ of degree $d$ and order $r$. Let $m'$ be the number of non-isomorphic Fourier--Mukai partners of $(S,\alpha)$ of order $r$.
Then when $d' \equiv 0 \mod 4$ (resp.\ $d' \equiv 2 \mod 8$), there are at least $m'$ (resp.\ $2m'$) fibers of the period map of GM fourfolds such that, when non-empty, their elements are Fourier--Mukai partners of $X$.
\end{theorem}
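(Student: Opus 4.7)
The plan is to adapt the argument used for Theorem \ref{thm_FMp} to the twisted setting, substituting the rational map to the moduli of polarized K3 surfaces with the one to polarized twisted K3 surfaces provided by Theorem \ref{thm_ratmaptwisted}. Let $X$ be a very general Hodge-special GM fourfold of discriminant $d'$, with associated twisted K3 surface $(S,\alpha)$ of degree $d$ and order $r$. By Theorem \ref{thm_ratmaptwisted} there is a dominant rational map $\phi$ from the moduli stack of Hodge-special GM fourfolds of discriminant $d'$ to a component $N$ of the moduli space of polarized twisted K3 surfaces of degree $d$ and order $r$, sending $X$ to $(S,\alpha)$.

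The first step is to produce, for each twisted Fourier--Mukai partner of $(S,\alpha)$ of order $r$ whose period point lies in $N$, a GM fourfold which is a Fourier--Mukai partner of $X$. Let $(S_1,\alpha_1),\dots,(S_{m'},\alpha_{m'})$ denote the distinct order-$r$ Fourier--Mukai partners of $(S,\alpha)$. For each $i$ with $[(S_i,\alpha_i)]\in N$, dominance of $\phi$ together with generality of $X$ yields a Hodge-special GM fourfold $X_i$ of discriminant $d'$ with associated twisted K3 $(S_i,\alpha_i)$. Via the embedding of $\Ku(X_i)$ into $\D(S_i,\alpha_i)$ from \cite{Pert2}, any twisted derived equivalence $\D(S,\alpha)\simeq\D(S_i,\alpha_i)$ can be used to produce an equivalence $\Ku(X)\simeq\Ku(X_i)$ of Fourier--Mukai type, so that $X_i$ is a Fourier--Mukai partner of $X$. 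By \cite[Theorem 4.4]{debarre_iliev_manivel_2015} each such $X_i$ lies on a $4$-dimensional fiber of the period map.

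The second step is the arithmetic count, which translates fibers of $\phi$ into fibers of the period map. This repeats the analysis used for Theorem \ref{thm_FMp}, which in turn relies on the isomorphism of marked and labelled moduli stacks given by Theorem \ref{thm_isomodstacks}. When $d'\equiv 0\mod 4$, the map $\phi$ is generically injective on fibers of the period map, so the points $X_i$ produce at least $m'$ distinct period fibers whose elements are Fourier--Mukai partners of $X$. When $d'\equiv 2\mod 8$, the comparison between labelled and marked moduli introduces a nontrivial involution, exactly as in \cite{BrakkeeTwoK3} for cubic fourfolds, which doubles the count to at least $2m'$.

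Finally, the reason for a lower bound rather than an equality, in contrast to Theorem \ref{thm_FMp}, is that $\phi$ is dominant onto a single component $N$ of the moduli of polarized twisted K3 surfaces: any Fourier--Mukai partner of $(S,\alpha)$ of order $r$ whose period point lies in another component is invisible to this argument. The main technical obstacle I expect is verifying that the derived equivalences $\D(S,\alpha)\simeq\D(S_i,\alpha_i)$ restrict to equivalences on the Kuznetsov components of the GM fourfolds, equivalently, that for a very general Hodge-special $X$ the associated twisted K3 category detects Fourier--Mukai partnership of Kuznetsov components. This should follow by combining the description of the Kuznetsov component in \cite{Pert2} with the Torelli-type results valid for the generic member of the moduli.
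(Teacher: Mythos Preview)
Your proposal has the right overall shape—run the argument of Theorem~\ref{thm_FMp} with $\delta_{d'}$ in place of $\gamma_d$—but there are two genuine gaps.

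First, the direction in which you use the rational map is wrong. You try to lift each Fourier--Mukai partner $(S_i,\alpha_i)\in N=\tM_w$ to a GM fourfold $X_i$ via dominance of $\phi$, but dominance does not give preimages of specific points. More seriously, unlike the untwisted case where $\mD_{L_d}^{\lab}\cong\Omega(\Lambda_d)/\stO(\Lambda_d)$ is birational to the period space of polarized K3 surfaces, here the map factors as
\[
\mD_{L_{d'}}^{\lab}\cong\Omega(T_w)/\stO(T_w)\xrightarrow{\ \pi\ }\Omega(T_w)/\stab(w)\supset\tM_w,
\]
and $\pi$ is a genuinely finite map of degree $[\stab(w):\stO(T_w)]$, not generically injective. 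So your claim that $\phi$ is ``generically injective on fibers of the period map'' when $d'\equiv 0\bmod 4$ is false. The paper instead works at the level of $\Omega(T_w)/\stO(T_w)\cong\mD_{L_{d'}}^{\lab}$: each Fourier--Mukai partner has transcendental lattice Hodge-isometric to $T_w\cong L_{d'}^\perp(-1)$, which directly yields a period point in $\mD_{L_{d'}}$. The identification of exactly $m'$ such points uses Lemmas~\ref{OrthGpSurj} and~\ref{lemma_inverseorientation} together with the counting results of \cite{Ma} and \cite{Pert1}; this is where the explicit formula for $m'$ enters, and it is not a step you can bypass.

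Second, your explanation of the lower bound is backwards. You say the ``at least'' comes from Fourier--Mukai partners of $(S,\alpha)$ lying in components other than $N$; but that would make your construction produce \emph{fewer} than $m'$ period fibers, not more. The actual reason the theorem gives only a lower bound (see the remark following Proposition~\ref{prop_FMptwistedcase}) is that $X$ may have further Fourier--Mukai partners arising from twisted K3 partners of $(S,\alpha)$ whose Brauer class has order \emph{different} from $r$; these are invisible to $\delta_{d'}$ but still contribute fibers of FM partners of $X$.

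Finally, the ``embedding'' of $\Ku(X_i)$ into $\D(S_i,\alpha_i)$ you invoke is actually an equivalence $\Ku(X_i)\simeq\D(S_i,\alpha_i)$, and establishing it requires \cite[Theorem~1.9]{PPZ} in addition to \cite{Pert2}; this is precisely the ``technical obstacle'' you flag at the end, and it is essential input rather than a detail to be checked.
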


\begin{plan}
In Section \ref{section_introGM} we recall the definition of (Hodge-special) GM fourfolds and some results concerning their Hodge theory. In Section \ref{section-markedlabelled} we define marked and labelled Hodge-special GM fourfolds and we prove Theorem \ref{thm_isomodstacks}. Section \ref{section_GMvsK3} is devoted to the construction of the rational map of Theorem \ref{cor_ratmap} and the proof of Theorem \ref{thm_FMp}. Finally, in Section \ref{section_GMandtwistedK3} we recall the construction of moduli spaces of twisted K3 surfaces with fixed order and degree, and we prove Theorems \ref{thm_ratmaptwisted} and \ref{thm_FMptwisted}.
\end{plan}

\begin{notation}
Given a lattice $L$, we denote by $\Disc L:=L^{\dual}/L$ its discriminant group and we set $\stO(L) := \ker(\tO(L) \to \tO(\Disc L))$. For any integer $m \neq 0$ we denote by $L(m)$ the lattice $L$ with the intersection form multiplied by $m$. 

We denote by $I_1$ the lattice $\Z$ with bilinear form $(1)$, $I_{r,s}:=I_1^{\oplus r} \oplus I_1(-1)^{\oplus s}$, $A_1:=I_1(2)$, $U$ is the hyperbolic plane $\left(\Z^{\oplus 2},\bigl(\begin{smallmatrix}
0 & 1 \\ 
1 & 0
\end{smallmatrix}\bigr)\right)$ and $E_8$ is the unique even unimodular lattice of signature $(8,0)$. 

For $3 \geq i \geq j \geq 0$ the Schubert cycles on the Grassmannian $\Gr(2,5)$ are denoted by $\sigma_{i,j} \in \HH^{2(i+j)}(\Gr(2,5),\Z)$ and we set $\sigma_i:=\sigma_{i,0}$. 
\end{notation}

\begin{ack}
We thank Gerard van der Geer and Mingmin Shen for their interest, and Thorsten Beckmann for useful discussions. We are grateful to Daniel Huybrechts, Alex Perry and Paolo Stellari for suggestions on the preliminary version of this work.

This work started when the second author was visiting the Max-Planck-Institut f\"ur Mathematik in Bonn whose hospitality is gratefully acknowledged.

The first author is supported by
NWO Innovational Research Incentives Scheme 016.Vidi.189.015. The second author is supported by the ERC Consolidator Grant ERC-2017-CoG-771507, Stab-CondEn.
\end{ack}

\section{Gushel--Mukai fourfolds}
\label{section_introGM}
In this section, we review the definition of Gushel--Mukai fourfolds and some known results concerning their Hodge theory. Our main references are \cite{debarre_iliev_manivel_2015, debarre_kuznetsov_2019}. We assume the base field is $\C$.

\subsection{Cohomology and period domain of Gushel--Mukai fourfolds}
Let $V_5$ be a $5$-dimensional $\C$-vector space and denote by $\text{CGr}(2,V_5)$ the cone over the Grassmannian $\Gr(2,V_5)$ with vertex $\nu:=\p(\C)$, embedded in $\p(\C \oplus \bigwedge^2 V_5) \cong \p^{10}$ via the Pl\"ucker embedding of $\Gr(2,V_5) \subset \p(\bigwedge^2 V_5)$.

\begin{definition}
A \emph{Gushel--Mukai (GM) fourfold} is a smooth $4$-dimensional intersection
$$X:=\text{CGr}(2,V_5) \cap Q$$
where $Q \subset \p(W)$ is a quadric hypersurface in a linear space $\p(W) \cong \p^8 \subset \p(\C \oplus \bigwedge^2 V_5)$. 
\end{definition}

Since $X$ is smooth, the linear projection $\gamma_X\colon X \to \Gr(2,V_5)$ from the vertex $\nu$ is a regular map. 
The restriction of the hyperplane class on $\p(\C \oplus \bigwedge^2 V_5)$ defines a natural polarization $H:=\gamma_X^*\sigma_1$ on $X$ with degree $H^4=10$. By the adjunction formula, the canonical divisor is $K_X=-2H$, so $X$ is a Fano fourfold of degree $10$ and index $2$.

The moduli stack $\mM_4$ of GM fourfolds is a smooth, irreducible Deligne--Mumford stack of finite type over $\C$ of dimension $24$ \cite[Prop.\ 2.4]{kuznetsov_perry}.

\medskip
By \cite[Lemma 4.1]{Iliev_Maniv} the Hodge diamond of $X$ is
\[
\begin{tabular}{ccccccccccccccc}
&&&&&&&1\\
&&&&&&0&&0&\\
&&&&&0&&1&&0\\
&&&&0&&0&&0&&0\\
&&&0&&1&&22&&1&&0.
\end{tabular}
\]
By \cite[Proposition 5.1]{debarre_iliev_manivel_2015} there is an isomorphism of lattices
$$H^4(X,\Z) \cong \Lambda:= I_{22,2}.$$
Note that the rank-2 lattice $\HH^4(\Gr(2,V_5),\Z)$ embeds into $\HH^4(X,\Z)$ via $\gamma_X^*$. The \emph{vanishing lattice} of $X$ is the sublattice
$$\HH^4(X,\Z)_{00}:=\left\lbrace x \in \HH^4(X,\Z) \mid x \cdot \gamma_X^{*}(\HH^4(\Gr(2,V_5),\Z))=0 \right\rbrace.$$
By \cite[Proposition 5.1]{debarre_iliev_manivel_2015} it is isomorphic to 
$$\Lambda_{00}:=E_8^{\oplus 2} \oplus U^{\oplus 2} \oplus A_1^{\oplus 2}.$$
Note that the intersection form on $\gamma_X^*(\HH^4(\Gr(2,V_5),\Z))$ with respect to the basis $\gamma_X^*\sigma_{1,1}, \gamma_X^*\sigma_2$ is represented by the matrix $\begin{pmatrix}
2 & 2\\
2 & 4
\end{pmatrix}$. Fixing a primitive embedding of $\Lambda_{00}$ into $\Lambda$, we set $\Lambda_G:=\Lambda_{00}^{\perp}\subset\Lambda$ and we can find two generators $\lambda_1$ and $\lambda_2$ of $\Lambda_G$ such that the intersection matrix is
$\begin{pmatrix}
2 & 0\\
0 & 2
\end{pmatrix}$.

The period domain of GM fourfolds is the complex manifold 
\begin{equation*}
\label{locperiodom}
\Omega(\Lambda_{00}):= \lbrace w \in \p(\Lambda_{00} \otimes \C) \mid w \cdot w =0, w \cdot \bar{w}<0 \rbrace.
\end{equation*}
Note that the group $\stO(\Lambda_{00})$ acts properly discontinuously on $\Omega(\Lambda_{00})$ and it is isomorphic to
\[\Gamma:=\{g\in\tO(\Lambda)\mid g|_{\Lambda_G}=\id_{\Lambda_G}\}.\]
The quotient 
$$\mD:=\Omega(\Lambda_{00})/\stO(\Lambda_{00})$$
is an irreducible quasi-projective variety of dimension $20$ and by \cite[Theorem 4.4]{debarre_iliev_manivel_2015} the period map $p\colon \mM_4 \to \mD$ is dominant as a map of stacks with smooth $4$-dimensional fibers. The period point of $X$ is $p(X) \in \mD$.

\subsection{Hodge-special Gushel--Mukai fourfolds}
\label{section_HspGM}
A very general GM fourfold $X$ satisfies $\rk \HH^{2,2}(X,\Z)=2$.
We call $X$ \emph{Hodge-special} if $H^{2,2}(X,\Z)$ contains a rank-three primitive sublattice containing $\gamma_X^*(H^4(\Gr(2,V_5),\Z))$. 

Period points of Hodge-special GM fourfolds lie in codimension-$1$ Noether--Lefschetz loci in $\mD$. 
Indeed, let $L_d \subset \Lambda$ be a primitive rank-three positive definite sublattice containing $\Lambda_G$, with discriminant $d$. By \cite[Lemma 6.1]{debarre_iliev_manivel_2015} we have $d \equiv 0,2$ or $4 \mod 8$.
Consider the codimension-$1$ locus
$$\Omega(L_d^\perp):=\p(L_d^\perp \otimes \C) \cap \Omega(\Lambda_{00})$$
where $L_d^\perp$ is the orthogonal complement of $L_d$ in $\Lambda$. Let 
$$\mD_{L_d}\subset\mD$$
be the image of $\Omega(L_d^{\perp})$ under the 
map $\Omega(\Lambda_{00})\to \mD$. Then the period of any Hodge-special GM fourfold lies in $\mD_{L_d}$ for some $L_d$.

By \cite[Proposition 6.2]{debarre_iliev_manivel_2015}, 
the lattice $L_d$ only depends on the discriminant $d$, and depending on $d$, there are one or two embeddings of $L_d$ into $\Lambda$ up to composition with elements of $\stO(\Lambda_{00})$.
To be precise, up to the action of $\stO(\Lambda_{00})$, there exists $\tau \in L_d$ such that $\lambda_1, \lambda_2, \tau$ is a basis for $L_d$ with intersection matrix given by
\[
\begin{pmatrix}
2 & 0 & 0 \\ 
0 & 2 & 0 \\ 
0 & 0 & 2k
\end{pmatrix} \quad \text{if }d=8k,
\]
\[
\begin{pmatrix}
2 & 0 & 1 \\ 
0 & 2 & 0 \\ 
1 & 0 & 2k
\end{pmatrix} \quad \text{or} \quad 
\begin{pmatrix}
2 & 0 & 0 \\ 
0 & 2 & 1 \\ 
0 & 1 & 2k
\end{pmatrix} \quad \text{if }d=2+8k,
\]
\[
\begin{pmatrix}
2 & 0 & 1 \\ 
0 & 2 & 1 \\ 
1 & 1 & 2k
\end{pmatrix} \quad \text{if }d=4+8k.
\]
In the case $d=2+8k$, denote by $\mD'_d$ and $\mD''_d$ the divisors $\mD_{L_d}$ corresponding to the first and second embedding of $L_d$, respectively. 
It follows from \cite[Corollary 6.3]{debarre_iliev_manivel_2015} that the periods of Hodge-special GM fourfolds are contained in the union of 
\begin{enumerate}[noitemsep,label=(\roman*)]
    \item the irreducible hypersurfaces $\mD_d:=\mD_{L_d}\subset \mD$ for all $d\equiv 0\mod 4$;
    \item the unions $\mD_d:=\mD'_d\cup\mD''_d$ for all $d\equiv 2\mod 8$.
\end{enumerate}
Moreover, there exists an involution $r \in \tO(\Lambda_{00})$ which is not in $\stO(\Lambda_{00})$, inducing an involution $r_{\mD}$ on $\mD$ which exchanges $\mD_d'$ and $\mD_d''$ when $d \equiv 2 \mod 8$.

\medskip
We say that a Hodge-special GM fourfold $X$ has discriminant $d$ if its period point belongs to $\mD_d$.
The moduli stack of Hodge-special GM fourfolds of discriminant $d$ is $\mM_4\times_{\mD}\mD_d$. Note that a very general $X \in \mM_4\times_{\mD}\mD_d$ satisfies $\text{rk}\HH^{2,2}(X,\Z)=3$. 

It is known that each of the irreducible divisors $\mD_{L_d}$ intersects the image of $p$ for $d > 8$ \cite[Theorem 8.1]{debarre_iliev_manivel_2015}, so   $\mD_{L_d}\cap\im(p)$ contains an open dense subset of $\mD_{L_d}$.
It follows that the restriction $p\colon \mM_4\times_{\mD}\mD_d\to\mD_d$ is still dominant when $d>8$.

\section{Marked and labelled Gushel--Mukai fourfolds}
\label{section-markedlabelled}
In analogy to \cite[Definition 3.1.3]{Hassett} for cubic fourfolds, we give the following definition. Let $L_d$ be a rank-$3$ positive definite lattice containing $\Lambda_G$. 
\begin{definition}
A \emph{marked} Hodge-special GM fourfold is a GM fourfold $X$ together with a primitive embedding $\varphi\colon L_d\hookrightarrow \HH^{2,2}(X,\Z)$ preserving the classes $\lambda_1$ and $\lambda_2$. 
A \emph{labelled} Hodge-special GM fourfold is a GM fourfold $X$ together with a primitive sublattice $L_d\subset \HH^{2,2}(X,\Z)$.
\end{definition}
So a labelling of a GM fourfold is the image of a marking.

\medskip
Two marked GM fourfolds 
$(X,\varphi\colon L_d\hookrightarrow \HH^{2,2}(X,\Z))$ and ${(X',\varphi'\colon L_d\hookrightarrow \HH^{2,2}(X',\Z))}$ 
are isomorphic if there is an isomorphism $f\colon X\to X'$ such that $f^*\colon \HH^4(X',\Z)\to \HH^4(X,\Z)$ satisfies
$f^*\circ \varphi = \varphi'$.
Two labelled GM fourfolds 
$(X,L_d\subset \HH^{2,2}(X,\Z))$ and ${(X', L_d\subset \HH^{2,2}(X',\Z))}$ 
are isomorphic if there exists an isomorphism $f\colon X\to X'$ such that $f^*$ preserves $L_d$.

\begin{remark}
Consider the sets of isomorphism classes of marked and labelled GM fourfolds. There is a map
\[\{\text{ marked GM 4-folds }\}/_{\cong} \to \{\text{ labelled GM 4-folds }\}/_{\cong}\]
sending $(X,\varphi\colon L_d\hookrightarrow \HH^{2,2}(X,\Z))$ to 
$(X,\varphi(L_d)\subset \HH^{2,2}(X,\Z))$. It is surjective but need, a priori, not be injective: 
The lattice $L_d$ could have non-trivial automorphisms fixing the $\lambda_i$.
\end{remark}

Fix an embedding $L_d\hookrightarrow \Lambda$.
Recall that $\mD_{L_d}$ is the image of $\Omega(L_d^{\perp})$ under 
${\Omega(\Lambda_{00})\to\mD=\Omega(\Lambda_{00})/\Gamma}$.
Let
\begin{align*}
G(L_d) &:= \{g\in\Gamma : g(L_d) = L_d\}\\
H(L_d) &:= \{g\in G(L_d): g|_{L_d} = \id_{L_d}\}
\end{align*}
and define
\begin{align*}
\mD_{L_d}^{\lab} &:= \Omega(L_d^{\perp})/G(L_d)\\
\mD_{L_d}^{\mar} &:= \Omega(L_d^{\perp})/H(L_d).
\end{align*}
Then we have surjective maps
\[\mD_{L_d}^{\mar}\to \mD_{L_d}^{\lab}\to\mD_{L_d}\subset\mD.\]
When $d\equiv 0\mod 4$, we set $\mD_d^{\lab}:=\mD_{L_d}^{\lab}$ and 
$\mD_d^{\mar}:=\mD_{L_d}^{\mar}$. When $d\equiv 2\mod 8$, we have two embeddings $\mD_{L_d}\xrightarrow{\cong}\mD_d'\subset\mD$ and $\mD_{L_d}\xrightarrow{\cong}\mD_d''\subset\mD$; let $(\mD_d')^{\lab}$ and $(\mD_d'')^{\lab}$ be the corresponding spaces $\mD_{L_d}^{\lab}$ over $\mD_d'$ and $\mD_d''$, respectively.
Note that if $x\in\mD_d'\cap\mD_d''$, then there are two embeddings of $L_d$ into the (2,2)-part of the corresponding Hodge structure on $\Lambda_{00}$ that are in different $\stO(\Lambda_{00})$-orbits. So $x$ has two labellings, giving rise to one point in $(\mD_d')^{\lab}$ and one in $(\mD_d'')^{\lab}$. Accordingly, we let $\mD_d^{\lab}$ be the disjoint union
\[\mD_d^{\lab}:=(\mD_d')^{\lab}\coprod(\mD_d'')^{\lab}.\]
Analogously, define $\mD_d^{\mar}:=(\mD_d')^{\mar}\coprod(\mD_d'')^{\mar}$.
 Then the moduli stacks of labelled and marked Hodge-special GM fourfolds of discriminant $d$ are $\mM_4\times_{\mD}\mD_d^{\lab}$ and 
$\mM_4\times_{\mD}\mD_d^{\mar}$, respectively.

\medskip
In the rest of this section, we analyze the natural surjective morphisms
\[\mD_{L_d}^{\mar}\to \mD_{L_d}^{\lab}\to\mD_{L_d}.\]
\begin{lemma}
\label{lemma_mapnu}
The natural map $\nu\colon \mD_{L_d}^{\lab}\twoheadrightarrow\mD_{L_d}$ is a normalization.
\end{lemma}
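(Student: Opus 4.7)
My plan is to identify $\nu\colon \mD_{L_d}^{\lab} \to \mD_{L_d}$ as a finite, birational, surjective morphism from a normal variety, which characterizes the normalization.

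\emph{Normality of the source.} The space $\Omega(L_d^\perp)$ is a smooth open subset of the quadric $\{w\cdot w=0\}\subset\p(L_d^\perp\otimes\C)$, and $G(L_d)\subset\Gamma$ acts on it properly discontinuously with finite stabilizers (inherited from the $\Gamma$-action on $\Omega(\Lambda_{00})$). The image of $G(L_d)$ in $\tO(L_d^\perp)$ is an arithmetic subgroup, so Baily--Borel endows $\mD_{L_d}^{\lab}=\Omega(L_d^\perp)/G(L_d)$ with the structure of a normal quasi-projective variety.

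\emph{Generic injectivity.} The key lattice-theoretic input is that, for very general $w\in\Omega(L_d^\perp)$, the primitive sublattice $\Lambda\cap w^\perp$ of $\Lambda$ equals $L_d$. Indeed, $\Lambda_G\subset \Lambda\cap w^\perp$ (since $\Lambda_G\perp\Lambda_{00}$) and $L_d\cap\Lambda_{00}\subset \Lambda_{00}\cap w^\perp$ (since $w\in L_d^\perp\otimes\C$). For any $v\in\Lambda_{00}$ not in $L_d\cap\Lambda_{00}$, the condition ``$v\perp w$'' cuts out a proper subvariety of $\Omega(L_d^\perp)$, so generically $\Lambda_{00}\cap w^\perp=L_d\cap\Lambda_{00}$; this forces $\Lambda\cap w^\perp$ to be a primitive rank-$3$ sublattice of $\Lambda$ containing $L_d$, hence equal to $L_d$. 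Now if $g\in\Gamma$ satisfies $g\cdot w\in\Omega(L_d^\perp)$, then $g^{-1}(L_d)\perp w$, so $g^{-1}(L_d)\subseteq\Lambda\cap w^\perp=L_d$; since $g^{-1}$ is a lattice isometry, $g^{-1}(L_d)=L_d$, i.e., $g\in G(L_d)$. Therefore the fiber of $\nu$ over a very general period is a single point, and $\nu$ is birational.

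\emph{Finiteness.} The same bookkeeping shows, for arbitrary $w$, that $\nu^{-1}(\nu([w]))$ is in bijection with the set of primitive rank-$3$ sublattices $L'\subseteq\Lambda\cap w^\perp$ containing $\Lambda_G$, $\Gamma$-equivalent to $L_d$, and of discriminant $d$, modulo the action of $\stab_\Gamma(w)$. Since $\Lambda\cap w^\perp$ is positive definite (it sits in the $(2,2)$-part of a K3-type Hodge structure on $\Lambda$) and $d$ is fixed, only finitely many such $L'$ exist, so $\nu$ is quasi-finite. Quasi-finiteness upgrades to finiteness by extending $\nu$ to the Baily--Borel compactifications of source and target: the extension is a proper, quasi-finite morphism between projective varieties, hence finite, and so is its restriction $\nu$.

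Putting the three steps together, $\nu$ is a finite, surjective, birational morphism from the normal variety $\mD_{L_d}^{\lab}$, which is precisely the characterization of the normalization of $\mD_{L_d}$. I expect the most delicate step to be the lattice-theoretic identification $\Lambda\cap w^\perp=L_d$ for very general $w$, which drives both the rank-$3$ primitivity and the reduction $g\in G(L_d)$; normality of the source and the quasi-finite-to-finite upgrade are relatively standard in the arithmetic-quotient setting.
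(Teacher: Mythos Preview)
Your proof is correct and spells out in full the argument that the paper only cites (the paper's proof is a one-line reference to the cubic-fourfold case in \cite[Section~2.3]{BrakkeeTwoK3}). The three-step structure you use---$\mD_{L_d}^{\lab}$ is normal as an arithmetic quotient, $\nu$ is generically injective via the lattice identification $\Lambda\cap w^\perp=L_d$ for very general $w$, and quasi-finiteness upgrades to finiteness through the Baily--Borel compactification---is exactly the standard approach and is what the cited reference carries out in the cubic setting; nothing substantively different is happening.
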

\begin{proof}
The argument is the same as in the case of cubic fourfolds in \cite[Section~2.3]{BrakkeeTwoK3}.
\end{proof}

Note that a non-normal point in $\mD_{L_d}$ has two different labellings by $L_d$. In particular, the integral (2,2)-part of the corresponding Hodge structure has rank bigger than 3.

\begin{proposition}\label{MainResult}
The map $\mD_{L_d}^{\mar}\twoheadrightarrow\mD_{L_d}^{\lab}$ is an isomorphism.
\end{proposition}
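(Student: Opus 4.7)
The map $\pi\colon \mD_{L_d}^{\mar}\to\mD_{L_d}^{\lab}$ is by construction the natural surjective quotient $\Omega(L_d^\perp)/H(L_d)\to\Omega(L_d^\perp)/G(L_d)$ induced by the inclusion $H(L_d)\subseteq G(L_d)$. Since both groups act properly discontinuously on $\Omega(L_d^\perp)$, it suffices to show that $\pi$ is bijective; equivalently, that the quotient $G(L_d)/H(L_d)$ acts trivially on $\Omega(L_d^\perp)$. Restriction to $L_d$ gives an injection $G(L_d)/H(L_d)\hookrightarrow \mathrm{Aut}(L_d,\lambda_1,\lambda_2)$, so we first control the right-hand side.

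A direct calculation on each of the three Gram matrices listed in Section \ref{section_HspGM} shows that $\mathrm{Aut}(L_d,\lambda_1,\lambda_2)$ has order at most $2$. The non-trivial element $\sigma$, when it exists, can be described uniformly: it is the identity on $\Lambda_G\otimes\Q$ and $-\id$ on its orthogonal complement in $L_d\otimes\Q$. In particular, writing $v$ for the primitive generator of $L_d\cap\Lambda_{00}$, one finds $\sigma(v)=-v$; explicitly, $v=\tau$ when $d\equiv 0\mod 8$, $v=\lambda_1-2\tau$ (respectively $\lambda_2-2\tau$) for the two embeddings when $d\equiv 2\mod 8$, and $v=\lambda_1+\lambda_2-2\tau$ when $d\equiv 4\mod 8$.

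To conclude, we exhibit a lift of $\sigma$ to $G(L_d)$ acting trivially on $\Omega(L_d^\perp)$. Define $\tilde g$ to be the rational isometry of $\Lambda\otimes\Q$ with $\tilde g|_{\Lambda_G\otimes\Q}=\id$ and $\tilde g|_{\Lambda_{00}\otimes\Q}=-\id$. The overlattice $\Lambda\supset \Lambda_G\oplus\Lambda_{00}$ corresponds to an isotropic subgroup of $\Disc(\Lambda_G)\oplus\Disc(\Lambda_{00})=(\Z/2)^2\oplus(\Z/2)^2$; since this group is $2$-torsion, the pair $(\id,-\id)$ acts trivially on it, so $\tilde g$ descends to an integral isometry of $\Lambda$. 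Clearly $\tilde g\in\Gamma$, and a short check on the glue vector $\tau$ gives $\tilde g(\tau)=\sigma(\tau)\in L_d$, so that $\tilde g(L_d)=L_d$, $\tilde g|_{L_d}=\sigma$, and therefore $\tilde g\in G(L_d)$. On the other hand, $\tilde g$ acts as $-\id$ on $L_d^\perp\subset \Lambda_{00}$, hence as the identity on the projectivisation $\p(L_d^\perp\otimes\C)$, and in particular on $\Omega(L_d^\perp)$.

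Thus the non-trivial coset of $H(L_d)$ in $G(L_d)$ (if any) is represented by $\tilde g$, which acts trivially on $\Omega(L_d^\perp)$. Hence $\pi$ is injective, and an isomorphism of analytic spaces. The main technical obstacle is the case-by-case computation of $\mathrm{Aut}(L_d,\lambda_1,\lambda_2)$ and the verification that $\tilde g$ preserves $L_d$; the latter is routine for $d\equiv 0\mod 8$ where $L_d=\Lambda_G\oplus\Z v$, but requires a small calculation in the cases $d\equiv 2,4\mod 8$, where $\Lambda_G\oplus \Z v$ sits as a proper index-$2$ sublattice of $L_d$ with glue generator $\tau$.
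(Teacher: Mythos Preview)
Your argument is correct. The overall architecture matches the paper's: both reduce to showing that $G(L_d)/H(L_d)\cong G'(L_d)$ has order at most $2$ and that its generator can be lifted to an isometry of $\Lambda$ acting as $-\id$ on $L_d^\perp$. The case analysis you allude to is exactly Lemma~\ref{lemma_descrofG'} in the paper.

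The difference lies in how the lift is produced. The paper computes that the non-trivial $\gamma'\in G'(L_d)$ acts as $-\id$ on $\Disc L_d$, and then invokes Nikulin's gluing criterion (applied to the primitive decomposition $L_d\oplus L_d^\perp\subset\Lambda$) to extend $\gamma'\oplus(-\id_{L_d^\perp})$ to $\tO(\Lambda)$. You instead work with the coarser decomposition $\Lambda_G\oplus\Lambda_{00}\subset\Lambda$ and observe that, since both discriminant groups are $2$-torsion, the global involution $\id_{\Lambda_G}\oplus(-\id_{\Lambda_{00}})$ is automatically integral; then you verify by hand that it preserves $L_d$. Your route is more elementary in that it avoids the discriminant-form computation and the citation to Nikulin, and it makes the lift completely explicit. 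The cost is the small additional check that $\tilde g(\tau)\in L_d$ in the non-split cases $d\equiv 2,4\bmod 8$, which the paper absorbs into the abstract extension theorem. In effect, your construction also \emph{exhibits} the non-trivial element of $G'(L_d)$ (as $\tilde g|_{L_d}$), so the phrase ``when it exists'' can be dropped: your $\tilde g$ shows it always does.
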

It follows that $\mD_d^{\mar}\to\mD_d^{\lab}$ is an isomorphism.

\medskip
In order to prove Proposition \ref{MainResult}, we will show that $G(L_d)/H(L_d)\cong\Z/2\Z$ and it is generated by an element
that restricts to $-\id$ on $L_d^{\perp}$.
As $-\id_{L_d^{\perp}}$ induces the trivial action on $\Omega(L_d^{\perp})$,
we deduce the proof of Proposition \ref{MainResult}.

\medskip
Let $G'(L_d) := \{g\in\tO(L_d): g(\lambda_i) = \lambda_i\}$.
Then $G(L_d)/H(L_d)$ is isomorphic to $G'(L_d)$ via restriction to $L_d$.

\begin{lemma}
\label{lemma_descrofG'}
The group $G'(L_d)$ is $\Z/2\Z$, generated by an element that acts on $\Disc(L_d)$ as $-\id$.
\end{lemma}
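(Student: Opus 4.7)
The plan is to exploit the structure of $L_d$ as a rank-$3$ positive definite lattice containing the rank-$2$ sublattice $\Lambda_G = \langle \lambda_1, \lambda_2 \rangle$, equipped with $\lambda_i^2 = 2$ and $\lambda_1 \cdot \lambda_2 = 0$. Extending scalars to $\Q$ reduces everything to a one-dimensional computation on the orthogonal complement of $\Lambda_G$, which avoids any case analysis on $d \pmod 8$.

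First I would observe that any $g \in G'(L_d)$ extends $\Q$-linearly to an isometry of $L_d \otimes \Q$ fixing $\Lambda_G \otimes \Q$ pointwise. Hence $g$ preserves the orthogonal complement $(\Lambda_G \otimes \Q)^{\perp}$, which is one-dimensional and positive definite, and there $g$ must act as $\pm \id$. This immediately gives $|G'(L_d)| \leq 2$.

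To produce the non-trivial involution, define $g$ as the $\Q$-linear map equal to $\id$ on $\Lambda_G \otimes \Q$ and to $-\id$ on $(\Lambda_G \otimes \Q)^{\perp}$. The orthogonal projection of $\tau$ onto $\Lambda_G \otimes \Q$ is $\tfrac{\tau \cdot \lambda_1}{2}\lambda_1 + \tfrac{\tau \cdot \lambda_2}{2}\lambda_2$, so
\[
g(\tau) = (\tau\cdot\lambda_1)\lambda_1 + (\tau\cdot\lambda_2)\lambda_2 - \tau,
\]
which lies in $L_d$ because $\tau \cdot \lambda_i \in \Z$. Thus $g$ restricts to an integral isometry of $L_d$ fixing $\lambda_1$ and $\lambda_2$, and hence $G'(L_d) = \{\id, g\} \cong \Z/2\Z$.

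Finally, to show that $g$ acts on $\Disc(L_d) = L_d^{\dual}/L_d$ as $-\id$, pick $v \in L_d^{\dual}$ and decompose $v = v_G + v^{\perp}$ with $v_G \in \Lambda_G \otimes \Q$ and $v^{\perp} \in (\Lambda_G \otimes \Q)^{\perp}$. Then
\[
v + g(v) = 2 v_G = (v\cdot\lambda_1)\lambda_1 + (v\cdot\lambda_2)\lambda_2,
\]
which lies in $\Lambda_G \subset L_d$ because $v \cdot \lambda_i \in \Z$ by definition of $L_d^{\dual}$. This is really the only point where integrality has to be checked: the factor of $2$ in the denominator of the projection onto $\Lambda_G$ is cancelled precisely by $\lambda_i^2 = 2$, so the argument goes through uniformly in $d$ with no significant obstacle.
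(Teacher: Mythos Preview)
Your argument is correct and takes a genuinely different route from the paper. The paper proceeds by a case analysis on $d\bmod 8$: in each of the three cases it writes down the Gram matrix of $L_d$ in the basis $\lambda_1,\lambda_2,\tau$, solves explicitly for $g(\tau)$, and then checks the action on explicit generators of $\Disc(L_d)$. Your proof instead works uniformly over $\Q$, using only that $\Lambda_G\otimes\Q$ has a one-dimensional orthogonal complement and that $\lambda_i^2=2$ makes the reflection $v\mapsto 2v_G-v$ integral; the identity $v+g(v)=(v\cdot\lambda_1)\lambda_1+(v\cdot\lambda_2)\lambda_2\in L_d$ then gives the $-\id$ action on the discriminant in one stroke. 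What this buys you is a cleaner, case-free argument that isolates exactly why the lemma holds (the self-intersections $\lambda_i^2=2$ are what force integrality of both $g$ and of $v+g(v)$). What the paper's approach buys is the explicit description of $g(\tau)$ and of the discriminant group in each case, which is not needed here but matches the explicit lattice data from \cite[Proposition~6.2]{debarre_iliev_manivel_2015} used elsewhere.
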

\begin{proof}
Let $g\in G'(L_d)$.  
Assume $d=8k$, so $L_d$ has a basis $\lambda_1,\lambda_2,\tau$ with corresponding intersection matrix
\[\begin{pmatrix} 2 & 0 & 0 \\
         0 & 2 & 0 \\
         0 & 0 & d/4
        \end{pmatrix}\]
(see Section \ref{section_HspGM}). Then either $g(\tau)=\tau$, so $g=\id_{L_d}$, or
$g(\tau)=-\tau$. In the second case, $g$ acts on the discriminant group
\[\Z/2\Z\oplus\Z/2\Z\oplus\Z/(d/4)\Z = \left\langle \frac{\lambda_1}{2},\frac{\lambda_2}2,\frac{\tau}{d/4}\right\rangle\]
of $L_d$ by 
\[\left(\frac{\lambda_1}{2},\frac{\lambda_2}2,\frac{\tau}{d/4}\right)\mapsto
 \left(\frac{\lambda_1}{2},\frac{\lambda_2}2,-\frac{\tau}{d/4}\right)\equiv
-\left(\frac{\lambda_1}{2},\frac{\lambda_2}2,\frac{\tau}{d/4}\right).
\]

Next, assume $d=2+8k$, so $L_d$ is isomorphic to the lattice with basis $\lambda_1,\lambda_2,\tau$ and intersection matrix
\[\begin{pmatrix} 2 & 0 & 0 \\
         0 & 2 & 1 \\
         0 & 1 & (d+2)/4
        \end{pmatrix}\]
Write $g(\tau) = a\lambda_1+b\lambda_2+c\tau$.
It follows from $g(\lambda_i) = \lambda_i$ that $a=0$ and $c=1-2b$, and solving $(g(\tau))^2 = (\tau)^2$ gives
$(b-b^2)d=0$. Hence we either have $b=0$,
so $g=\id_{L_d}$, or $b=1$ and $c=-1$, so $g(\tau) = \lambda_2-\tau$.
In the second case, the action on the discriminant group
\[\Z/2\Z\oplus\Z/(d/2)\Z = \left\langle \frac{\lambda_1}{2},\frac{\lambda_2-2\tau}{d/2}\right\rangle\]
of $L_d$ is given by 
\[\left(\frac{\lambda_1}{2},\frac{\lambda_2-2\tau}{d/2}\right)\mapsto
 \left(\frac{\lambda_1}{2},\frac{-\lambda_2+2\tau}{d/2}\right)\equiv
 -\left(\frac{\lambda_1}{2},\frac{\lambda_2-2\tau}{d/2}\right).
\]

Finally, assume $d=4+8k$, there is a basis $\lambda_1,\lambda_2,\tau$ for $L_d$ with intersection matrix
\[\begin{pmatrix} 2 & 0 & 1 \\
         0 & 2 & 1 \\
         1 & 1 & (d+4)/4
        \end{pmatrix}
\]
and write $g(\tau) = a\lambda_1+b\lambda_2+c\tau$.
Now $g(\lambda_i) = \lambda_i$, implies $a=b$ and $c = 1-2a$, and solving $(g(\tau))^2 = (\tau)^2$ gives 
$(a-a^2)d=0$. Hence we either get $a=0$,
so $g=\id_{L_d}$, or $a=1$ and $c=-1$, so $g(\tau) = \lambda_1+\lambda_2-\tau$.
In the second case, the action on the discriminant group
\[\Z/d\Z = \left\langle \frac{\lambda_1+\lambda_2-2\tau}{d}\right\rangle\]
of $L_d$ is given by 
\[\frac{\lambda_1+\lambda_2-2\tau}{d}\mapsto
 \frac{\lambda_1+\lambda_2-2(\lambda_1+\lambda_2-\tau)}{d}=
 -\frac{\lambda_1+\lambda_2-2\tau}{d}. \qedhere
\]
\end{proof}

\begin{proof}[Proof of Proposition~\ref{MainResult}]
By Lemma \ref{lemma_descrofG'}, the generator $\gamma'$ of $G'(L_d)$ acts as $-\id$ on $\Disc L_d$. Then $-\id_{L_d^{\perp}}\oplus\gamma'$ extends to an element $\gamma$ of $\tO(\Lambda)$ by \cite[Corollary 1.5.2 and Proposition 1.6.1]{Nikulin}, which generates $G(L_d)/H(L_d)$. Since by definition $\gamma$ restricts to
$-\id$ on $L_d^{\perp}$, we conclude that $\gamma$ acts trivially on $\Omega(L_d^\perp)$. This implies the statement.
\end{proof}

As a direct consequence of Proposition \ref{MainResult}, we get the following identification between moduli stacks of marked and labelled Hodge-special GM fourfolds.

\begin{cor}\label{IsoModStacks}
We have an isomorphism $\mM_4\times_{\mD}\mD_d^{\mar} \cong \mM_4\times_{\mD}\mD_d^{\lab}$.
\end{cor}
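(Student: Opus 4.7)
The plan is to deduce the corollary by base change from the isomorphism $\mD_d^{\mar}\to\mD_d^{\lab}$, which is in turn a formal consequence of Proposition~\ref{MainResult}.

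First I would establish that the natural map $\mD_d^{\mar}\to\mD_d^{\lab}$ is an isomorphism. When $d\equiv 0\mod 4$, this is literally Proposition~\ref{MainResult}, since by definition $\mD_d^{\mar}=\mD_{L_d}^{\mar}$ and $\mD_d^{\lab}=\mD_{L_d}^{\lab}$. When $d\equiv 2\mod 8$, both $\mD_d^{\mar}$ and $\mD_d^{\lab}$ are defined as disjoint unions of two copies of the corresponding spaces for $L_d$ (one over $\mD_d'$ and one over $\mD_d''$), and the map is componentwise the isomorphism of Proposition~\ref{MainResult}. So in both cases the composed map $\mD_d^{\mar}\to\mD_d^{\lab}$ is an isomorphism over $\mD$.

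Next I would invoke the standard fact that the fiber product is functorial and preserves isomorphisms in each argument: if $\alpha\colon A\to B$ is an isomorphism of $\mD$-stacks, then for any morphism $\mM_4\to\mD$, the induced morphism $\mM_4\times_{\mD}A\to\mM_4\times_{\mD}B$ is an isomorphism as well. Applying this with $\alpha\colon\mD_d^{\mar}\xrightarrow{\sim}\mD_d^{\lab}$ directly yields the desired isomorphism
\[\mM_4\times_{\mD}\mD_d^{\mar} \xrightarrow{\sim} \mM_4\times_{\mD}\mD_d^{\lab}.\]

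There is no genuine obstacle here; the content of the corollary is entirely carried by Proposition~\ref{MainResult}, and the only thing to verify is that the case $d\equiv 2\mod 8$, where marked and labelled spaces are defined as disjoint unions indexed by the two embeddings of $L_d$, does not introduce any new identifications. Since each component of $\mD_d^{\mar}$ maps to the corresponding component of $\mD_d^{\lab}$ via an isomorphism and the disjoint-union structure is respected by fiber product with $\mM_4$, this case is handled automatically.
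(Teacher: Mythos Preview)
Your proposal is correct and matches the paper's approach: the paper states the corollary as a direct consequence of Proposition~\ref{MainResult}, having already noted that $\mD_d^{\mar}\to\mD_d^{\lab}$ is an isomorphism (including the disjoint-union case $d\equiv 2\bmod 8$), and then base changes along $\mM_4\to\mD$. You have simply spelled out these straightforward steps explicitly.
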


\section{Gushel--Mukai fourfolds with associated K3 surface}
\label{section_GMvsK3}
In this section we prove Theorem \ref{cor_ratmap} and Theorem \ref{thm_FMp}.

\subsection{Rational maps to moduli spaces of K3 surfaces}\label{DefRationalMap}

The aim of this section is to construct the rational map of Theorem \ref{cor_ratmap}. Let $X$ be a Hodge-special GM fourfold whose period lies in $\mD_d$, that is, there are a rank-$3$ positive definite lattice $L_d$ of discriminant $d$ containing $\Lambda_G$ and a primitive embedding $L_d\hookrightarrow \HH^{2,2}(X,\Z)$. As in \cite[Section 6.2]{debarre_iliev_manivel_2015}, we say that a quasi-polarized K3 surface $(S,l)$ is \emph{Hodge-associated} to $X$ if there is a Hodge isometry
\[\HH^2(S,\Z)\supset l^{\perp}\cong L_d^{\perp}\subset\HH^4(X,\Z)\]
up to a sign and a Tate twist. In particular, $(S,l)$ has degree $d$. By \cite[Prop.~6.5]{debarre_iliev_manivel_2015} 
$X$ has a Hodge--associated quasi-polarized K3 surface if and only if $d$ satisfies 
\begin{equation}\tag{$\ast\ast$}
\label{eq_astast}
d\equiv 2,4\mod 8 \text{ and } p\centernot| d\text{ for every prime }p\equiv 3\mod 4.    
\end{equation}
Moreover, when the period does not lie in $\mD_d\cap\mD_8$, then the quasi-polarized K3 surface is actually polarized.

\medskip
Denote by $\Lambda_d:=E_8(-1)^{\oplus 2} \oplus U^{\oplus 2} \oplus I_1(-d)$ the lattice isomorphic to the primitive middle cohomology $\HH^2(S,\Z)_{\prim}:=l^{\perp}\subset\HH^2(S,\Z)$ of a polarized K3 surface $(S,l)$ of degree $d$. Then condition \eqref{eq_astast} on $d$ is equivalent to the existence of an isomorphism of lattices
$L_d^{\perp}\cong\Lambda_d(-1)$.
Under this isomorphism, the group $\stO(\Lambda_d(-1))$ is identified with $\stO(L_d^{\perp}) \cong H(L_d)$. Fix an embedding $L_d\hookrightarrow \Lambda$.
By Proposition \ref{MainResult}, we obtain the following commutative diagram:
\begin{equation}
\label{eq_diagram}    
\xymatrix@C=0em{\Omega(\Lambda_d(-1))\ar[d] \ar[rrrrr]^{\cong}  &&&&& \Omega(L_d^{\perp})\ar[d]\ar@{^{(}->}[rrrrr] &&&&& \Omega(\Lambda_{00})\ar[d]\\
 \Omega(\Lambda_d(-1))/\stO(\Lambda_d(-1))\ar[rrrr]^-{\cong} &&&& \Omega(L_d^{\perp})/H(L_d)= \!\!\!\!\! &\mD_{L_d}^{\lab}\ar[rrrr]^-{\nu} &&&& \mD_{L_d}\ar@{^{(}->}[r]&\mD
 }
\end{equation}
By Lemma \ref{lemma_mapnu}, the map $\nu$ is birational. It follows from the diagram above that there exists a birational map
\[\mD\supset\mD_{L_d}\dashrightarrow \Omega(\Lambda_d(-1))/\stO(\Lambda_d(-1))
\cong \Omega(\Lambda_d)/\stO(\Lambda_d).\]
In particular, we obtain a rational map
\begin{equation}
\label{eq_rationalmaponD}  
\mD_d\dashrightarrow\Omega(\Lambda_d)/\stO(\Lambda_d)
\end{equation}
which is birational when $d\equiv 4\mod 8$ and generically two-to-one when $d\equiv 2\mod 8$. Indeed, for a generic $x \in \mD_d'$, note that for some $g\in\tO(L_d^{\perp})$, $x$ and $(g\circ r)_{\mD}(x)$ are mapped to the same point in $\Omega(\Lambda_d)/\stO(\Lambda_d)$. 
The quotient $\Omega(\Lambda_d)/\stO(\Lambda_d)$ can be viewed as the moduli space of degree-$d$ quasi-polarized K3 surfaces \cite[Section~5]{HulekPloog}.
The map above induces a rational map
\begin{equation*}  
\mM_4\times_{\mD}\mD_d\dashrightarrow \Omega(\Lambda_d)/\stO(\Lambda_d)
\end{equation*}
sending a GM fourfold to an associated quasi-polarized K3 surface.

Now denote by $\tM_d$ the moduli space of polarized K3 surfaces of degree $d$. 
The period map induces an open immersion 
$\tM_d\hookrightarrow \Omega(\Lambda_d)/\stO(\Lambda_d)$.
When restricted to points outside $\mM_4\times_{\mD}\mD_8$, the image of the above rational map lies in $\tM_d$. We obtain a dominant rational map 
\begin{equation}
\label{eq_ratmap}
\gamma_d\colon\mM_4\times_{\mD}\mD_d\dashrightarrow \tM_d    
\end{equation}
sending a GM fourfold to an associated polarized K3 surface. 
This proves Theorem \ref{cor_ratmap}.
Note that $\gamma_d(X)$ is defined whenever $\rk\HH^{2,2}(X,\Z)=3$.

\begin{remark}
Note that $\gamma_d$ is not unique,
since the map $\mD_{L_d}\dashrightarrow \Omega(\Lambda_d)/\stO(\Lambda_d)$
depends on the choice of an isomorphism $L_d^{\perp}\cong\Lambda_d(-1)$.
To be precise, $\gamma_d$ is unique up to
$\tO(\Lambda_d)/\stO(\Lambda_d)$ when $d\equiv 4\mod 8$,
and up to $(\tO(\Lambda_d)/\stO(\Lambda_d))^2$ when $d\equiv 2\mod 8$.
\end{remark}

\subsection{Fibers of Fourier--Mukai partners}

We now apply the results in the previous sections to study Fourier--Mukai partners of GM fourfolds. In analogy to \cite{Huy, Pert1} for cubic fourfolds, we say that a \emph{Fourier--Mukai partner} of a GM fourfold $X$ is a GM fourfold $X'$ such that there exists an exact equivalence $\Ku(X) \xrightarrow{\sim} \Ku(X')$ of Fourier--Mukai type, i.e.\ the composition $\D(X) \to \Ku(X) \xrightarrow{\sim} \Ku(X') \to \D(X')$ has a Fourier--Mukai kernel. Note that by \cite[Theorem 1.6]{kuznetsov_perry_cones} non-isomorphic GM fourfolds in the same fiber of the period map are Fourier--Mukai partners. 

\medskip
Before proving Theorem \ref{thm_FMp}, we need to make the following remark. In analogy to \cite{AddTho}, the \emph{Mukai lattice} for $\Ku(X)$ has been defined in \cite[Section 3.1]{Pert2} as the abelian subgroup
$$\widetilde{\mathrm{H}}(\Ku(X),\Z):=\lbrace \kappa \in \mathrm{K}(X)_{\text{top}}: \chi([\mathcal{O}_X(i)],\kappa)=\chi([\mathcal{U}_X^*(i)],\kappa)=0 \, \text{ for }i=0,1 \rbrace$$
of the topological K-theory of $X$, with the Euler form $\chi$ with reversed sign and the weight-$2$ Hodge structure induced by pulling back via the isomorphism
$$\widetilde{\mathrm{H}}(\Ku(X),\Z) \otimes \C \rightarrow \HH^\bullet(X,\C)$$
given by the Mukai vector $v(-)=\text{ch}(-).\sqrt{\text{td}(X)}$. As a lattice, $\widetilde{\mathrm{H}}(\Ku(X),\Z) \cong U^{\oplus 4} \oplus E_8(-1)^{\oplus 2}$ by \cite[Theorem 1.2]{debarre_kuznetsov_2019}. We set
$$\widetilde{\mathrm{H}}^{1,1}(\Ku(X),\Z):= \widetilde{\mathrm{H}}^{1,1}(\Ku(X))\cap \widetilde{\mathrm{H}}(\Ku(X),\Z).$$
By \cite[Lemma 2.27]{kuznetsov_perry} there are two classes in  $\widetilde{\mathrm{H}}^{1,1}(\Ku(X),\Z)$ spanning a lattice $A_1^{\oplus 2}$. By \cite[Proposition 3.1]{Pert2}, there is a Hodge isometry
$$A_1^{\oplus 2\perp} \cong \HH^4(X,\Z)_{00},$$
up to a sign and a Tate twist, where the orthogonal complement is taken in $\widetilde{\mathrm{H}}(\Ku(X),\Z)$. 
In particular, a very general Hodge-special GM fourfold $X$ with discriminant $d$ has $\text{rk}\widetilde{\mathrm{H}}^{1,1}(\Ku(X),\Z)=3$ and $\Disc \widetilde{\mathrm{H}}^{1,1}(\Ku(X),\Z)=d$. Moreover, we have the following property.

\begin{lemma}
\label{lemma_eqinduceisometry}
Every equivalence $\emph{Ku}(X) \xrightarrow{\sim} \emph{Ku}(X')$ of Fourier--Mukai type induces a Hodge isometry $\widetilde{\mathrm{H}}(\emph{Ku}(X),\Z) \cong \widetilde{\mathrm{H}}(\emph{Ku}(X'),\Z)$.
\end{lemma}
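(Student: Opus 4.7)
The plan is to lift the given equivalence to the ambient derived categories, use that a Fourier--Mukai kernel induces a Hodge isometry of Mukai lattices on the nose, and then check that this isometry restricts correctly to the subgroups of interest.

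More precisely, let $\Psi\colon \Ku(X)\xrightarrow{\sim}\Ku(X')$ be an equivalence of Fourier--Mukai type. By definition, the composition
\[
\Phi:=\iota_{X'}\circ\Psi\circ\pi_X\colon \D(X)\longrightarrow\D(X'),
\]
where $\pi_X$ is the left adjoint of the inclusion $\Ku(X)\hookrightarrow\D(X)$ and $\iota_{X'}\colon \Ku(X')\hookrightarrow\D(X')$ is the inclusion, admits a Fourier--Mukai kernel $E\in\D(X\times X')$. As in the Addington--Thomas argument for cubic fourfolds, the associated map on topological K-theory
\[
\Phi_E^{\mathrm{top}}\colon \mathrm{K}(X)_{\mathrm{top}}\longrightarrow\mathrm{K}(X')_{\mathrm{top}}
\]
is a Hodge morphism with respect to the weight-two structures induced by the Mukai vector, and is compatible with the Euler forms on source and target, so it preserves the Mukai pairings $-\chi$.

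Next I would verify that $\Phi_E^{\mathrm{top}}$ restricts to a well-defined map between the Mukai lattices of the Kuznetsov components. Because $\pi_X$ annihilates each of the exceptional objects $\mO_X,\,\mathcal{U}_X^*,\,\mO_X(1),\,\mathcal{U}_X^*(1)$, the functor $\Phi$ vanishes on them, hence $\Phi_E^{\mathrm{top}}$ kills their classes. Dually, the image of $\iota_{X'}$ is contained in $\Ku(X')$, whose classes are, by definition, left-orthogonal with respect to $\chi$ to the corresponding exceptional classes on $X'$; hence $\im(\Phi_E^{\mathrm{top}})\subset \widetilde{\mathrm{H}}(\Ku(X'),\Z)$. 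Combining these, and using that $\widetilde{\mathrm{H}}(\Ku(X),\Z)$ is precisely the left-orthogonal in $\mathrm{K}(X)_{\mathrm{top}}$ to the exceptional classes (so $\Phi_E^{\mathrm{top}}$ does not lose information when restricted there), we obtain a Hodge morphism of lattices
\[
\Phi_E^{\mathrm{top}}|\colon \widetilde{\mathrm{H}}(\Ku(X),\Z)\longrightarrow \widetilde{\mathrm{H}}(\Ku(X'),\Z)
\]
compatible with the Mukai pairings.

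Finally, I would run the same construction for the quasi-inverse $\Psi^{-1}$, obtaining a Hodge morphism in the opposite direction. The compositions, computed again at the level of Fourier--Mukai kernels, agree with the identities after applying $\pi_X$ and $\pi_{X'}$; on the Mukai lattices these restrict to the identity, showing that $\Phi_E^{\mathrm{top}}|$ is an isometry. The main point that requires care is the bookkeeping in the second step, namely that the exceptional classes truly lie in the kernel on the source and that the image lies in the Mukai lattice of the target; everything else is a direct transposition of the corresponding statement for cubic fourfolds, using the semiorthogonal decomposition from \cite{kuznetsov_perry} in place of Kuznetsov's decomposition for cubics.
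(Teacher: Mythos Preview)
Your proposal is correct and is exactly the approach the paper has in mind: the paper's proof consists of the single sentence ``Apply a similar argument as in \cite[Proposition~3.3]{Huy}'', and what you have written is precisely that argument transposed from cubic fourfolds to GM fourfolds using the semiorthogonal decomposition of \cite{kuznetsov_perry}.
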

\begin{proof}
Apply a similar argument as in \cite[Proposition 3.3]{Huy}.
\end{proof}
By the above lemma, every Fourier--Mukai partner of a very general Hodge-special GM fourfold with discriminant $d$ is a very general Hodge-special GM fourfold of the same discriminant.

\medskip
We are now ready to prove the next proposition which implies Theorem \ref{thm_FMp}.
Denote by $\tau(d)$ the number of distinct primes that divide $d/2$.
\begin{proposition}
\label{prop_FMpGM}
Let $d$ be a positive integer satisfying condition \eqref{eq_astast}. If $X$ is a very general Hodge-special GM fourfold with discriminant $d \equiv 4 \mod 8$ (resp.\ $d \equiv 2 \mod 8$), then there are $2^{\tau(d)-1}$ (resp.\ $2^{\tau(d)}$) fibers of the period map $p$ such that, when non-empty, their elements are Fourier--Mukai partners of $X$. Moreover, all Fourier--Mukai partners of $X$ are obtained in this way.
\end{proposition}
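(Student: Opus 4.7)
The plan is to reduce the counting of Fourier--Mukai partners of $X$ to the analogous counting for its Hodge-associated K3 surface $S=\gamma_d(X)$, apply Oguiso's formula for K3 surfaces, and transport the result back through the rational map $\gamma_d$ of Theorem \ref{cor_ratmap}.

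First, I would use Lemma \ref{lemma_eqinduceisometry}: any Fourier--Mukai partner $X'$ of $X$ yields a Hodge isometry $\widetilde{\mathrm{H}}(\Ku(X),\Z)\cong\widetilde{\mathrm{H}}(\Ku(X'),\Z)$. By the remark following that lemma, $X'$ is also very general Hodge-special of discriminant $d$, so both sides have $(1,1)$-part of rank $3$ and discriminant $d$, each containing the standard $A_1^{\oplus 2}$. Restricting to the rank-$21$ transcendental lattices and invoking the Hodge isometry $(A_1^{\oplus 2})^{\perp}\cong\HH^4(X,\Z)_{00}$ together with the identification $L_d^{\perp}\cong\Lambda_d(-1)$ from Section \ref{DefRationalMap}, I extract a Hodge isometry $\mathrm{T}(S)\cong\mathrm{T}(S')$ up to sign and Tate twist, where $S':=\gamma_d(X')$. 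Since $S$ and $S'$ are very general and hence have Picard rank $1$, Mukai--Orlov's derived Torelli theorem yields $\D(S)\cong\D(S')$, so $\gamma_d$ sends Fourier--Mukai partners of $X$ to Fourier--Mukai partners of $S$.

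By Oguiso's formula, the number of isomorphism classes of Fourier--Mukai partners of the very general polarized K3 surface $S$ of degree $d$ equals $m=2^{\tau(d)-1}$. Combined with the degree of $\gamma_d$ from Theorem \ref{cor_ratmap}---birational when $d\equiv 4\mod 8$ and generically two-to-one when $d\equiv 2\mod 8$---the preimage under $\gamma_d$ of these isomorphism classes in $\tM_d$ consists of $2^{\tau(d)-1}$ points of $\mD_d$ in the first case and $2^{\tau(d)}$ points in the second. Each such point is the image under $p$ of a (possibly empty) fiber, giving the claimed counts.

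The main obstacle is the converse direction: showing that the elements of each such fiber are genuinely Fourier--Mukai partners of $X$, rather than merely mapping to a Fourier--Mukai partner of $S$ via $\gamma_d$. The strategy is, for any $X'$ in such a fiber, to extend the Hodge isometry $\mathrm{T}(S)\cong\mathrm{T}(S')$ to a Hodge isometry $\widetilde{\mathrm{H}}(\Ku(X),\Z)\cong\widetilde{\mathrm{H}}(\Ku(X'),\Z)$ by Nikulin gluing (as in the proof of Proposition \ref{MainResult}), and then to realize it as a Fourier--Mukai-type equivalence $\Ku(X)\xrightarrow{\sim}\Ku(X')$ using a derived Torelli statement for Kuznetsov components of very general GM fourfolds, available through the results of \cite{Pert2}. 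The assertion that \emph{all} Fourier--Mukai partners of $X$ appear in this list then follows by combining this construction with Lemma \ref{lemma_eqinduceisometry}.
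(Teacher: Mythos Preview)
Your overall architecture---associate to $X$ its K3 surface $S=\gamma_d(X)$, invoke Oguiso's count $m=2^{\tau(d)-1}$, and pull back through the rational map whose generic degree is $1$ or $2$---is exactly the paper's. The forward direction (Fourier--Mukai partners of $X$ go to Fourier--Mukai partners of $S$) and the final ``moreover'' clause are also handled the same way, via Lemma~\ref{lemma_eqinduceisometry}.

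The divergence, and the weak point of your proposal, is in what you call the ``main obstacle'': showing that if $X'$ lies in a fibre over a period point arising from a Fourier--Mukai partner $S_i$ of $S$, then $\Ku(X')\simeq\Ku(X)$. You propose to extend the Hodge isometry $T(S)\cong T(S_i)$ to the Mukai lattices by Nikulin gluing and then promote it to an equivalence via a ``derived Torelli statement for Kuznetsov components of very general GM fourfolds, available through the results of \cite{Pert2}.'' Such a Torelli statement is not what \cite{Pert2} supplies; that paper gives the Hodge-theoretic link, and the categorical input is \cite[Theorem~1.9]{PPZ}, which together with \cite[Theorem~3.6]{Pert2} yields directly an equivalence $\Ku(X)\xrightarrow{\sim}\D(S)$ (and likewise $\Ku(X')\xrightarrow{\sim}\D(S_i)$). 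The paper therefore bypasses both the Nikulin extension and any Torelli-type lifting on the GM side: one simply composes
\[
\Ku(X')\xrightarrow{\ \sim\ }\D(S_i)\xrightarrow{\ \sim\ }\D(S)\xrightarrow{\ \sim\ }\Ku(X),
\]
the middle arrow being the K3 Fourier--Mukai equivalence. Your route could be salvaged, but only by first invoking $\Ku(X)\cong\D(S)$ and $\Ku(X')\cong\D(S_i)$ to reduce the desired Torelli statement to the classical one for K3 surfaces---at which point the detour through Nikulin gluing is redundant. So replace that paragraph with the direct composition above, citing \cite[Theorem~3.6]{Pert2} and \cite[Theorem~1.9]{PPZ}.
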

\begin{proof}
We fix a choice of the rational map $\gamma_d\colon\mM_4\times_{\mD}\mD_d \dashrightarrow \tM_d $ of Section \ref{DefRationalMap}.
Let $X$ be a GM fourfold as in the statement and consider $\gamma_d(X)=(S,l)$, a degree-$d$ polarized K3 surface associated to $X$. Note that $S$ has Picard rank $1$. Moreover, by \cite[Theorem 3.6]{Pert2} and \cite[Theorem 1.9]{PPZ} there exists an exact equivalence $\Ku(X) \xrightarrow{\sim} \D(S)$. By \cite[Proposition 1.10]{Oguiso}, $S$ has $m:=2^{\tau(d)-1}$ non-isomorphic Fourier--Mukai partners. 

Choose $m$ K3 surfaces $S_1:=S, S_2 \dots, S_m$ as representatives for each isomorphism class of Fourier--Mukai partners endowed with the unique degree-$d$ polarizations $l_1:=l, \dots, l_m$, respectively. These polarized K3 surfaces determine $m$ distinct points in $\Omega(\Lambda_d)/\stO(\Lambda_d)$, which we still denote by $(S_i,l_i)$ for $1 \leq i \leq m$. 
As summarized in diagram \eqref{eq_diagram}, their image via \eqref{eq_rationalmaponD} defines $m$ (resp.\ $2m$) period points in $\mD_d$ if $d \equiv 4 \mod 8$ (resp.\ $d \equiv 2 \mod 8$). 
We denote by $x_i \in \mD_d$ the period point defined by $(S_i,l_i)$ if $d \equiv 4 \mod 8$, and by $x_i' \in \mD_d'$, $x_i'' \in \mD_d''$ those defined by $(S_i,l_i)$ if $d \equiv 2 \mod 8$. 
Assume that $x_i$ (resp.\ $x_i'$ or $x_i''$) is in the image of the period map $p$ and consider a GM fourfold $X'$ in the fiber of $p$ over this point. 
Then $$\Ku(X') \xrightarrow{\sim} \D(S_i) \xrightarrow{\sim} \D(S) \xrightarrow{\sim} \Ku(X).$$
Finally, by Lemma \ref{lemma_eqinduceisometry}, if $X'$ is a Fourier--Mukai partner of $X$, then $X'$ is a very general Hodge-special GM fourfold of the same discriminant. 
Thus $\gamma_d(X')$ is a well defined element in $\tM_d$. But then $\gamma_d(X')$ is a degree-$d$ polarized K3 surface which is a Fourier--Mukai partner of $(S,l)$, hence isomorphic to $(S_i,l_i)$ for a certain $1 \leq i \leq m$. This implies the statement.
\end{proof}

\begin{remark}
Note that the image of the period map is not known \cite[Question 9.1]{debarre_iliev_manivel_2015}. Thus some of the fibers of Proposition \ref{prop_FMpGM} could a priori be empty.
\end{remark}

\section{Gushel--Mukai fourfolds and twisted K3 surfaces}
\label{section_GMandtwistedK3}
 In this section we recall the definition of the moduli spaces of twisted polarized K3 surfaces with fixed order and degree introduced in \cite{BrakkeeTwistedK3}, and then we prove Theorem \ref{thm_ratmaptwisted} and Theorem \ref{thm_FMptwisted}.

\subsection{Moduli and periods of twisted K3 surfaces}
We summarize the relevant results of \cite{BrakkeeTwistedK3}.
Recall that for a complex K3 surface $S$, the Brauer group $\br(S)$ is isomorphic to the cohomological Brauer group 
\[\HH^2_{\text{\'et}}(S,\G_m)\cong\HH^2(S,\mO_S^*)_{\tors}\cong (\Q/\Z)^{\oplus 22-\rho(S)}.\]
Let $T(S):=\ns(S)^{\perp}\subset\HH^2(S,\Z)$ be the transcendental lattice of $S$. Then there is an isomorphism
\[\br(S)\cong\hom(T(S),\Q/\Z).\]
We denote by $\br(S)[r]$ the group of elements in $\br(S)$ whose order divides $r$. There exists a surjection
\[\widetilde{\br}(S)[r]:=\hom(\HH^2(S,\Z)_{\prim},\Z/r\Z)\twoheadrightarrow \hom(T(S),\Z/r\Z)\cong \br(S)[r]\]
which is an isomorphism if and only if $\rho(S)=1$.

\begin{theorem}[{\cite[Theorem~1]{BrakkeeTwistedK3}}]
 There exists a scheme $\tM_d[r]$ which is a coarse moduli space for triples $(S,l,\alpha)$ consisting of a polarized K3 surface $(S,l)$ of degree $d$ and an element $\alpha\in\widetilde{\br}(S)[r]$.
 There exists a subscheme $\tM_d^r\subset\tM_d[r]$ which is a coarse moduli space for those triples for which $\alpha$ has order $r$.
\end{theorem}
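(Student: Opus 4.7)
The plan is to construct $\tM_d[r]$ as a finite étale cover of the coarse moduli space $\tM_d$ of degree-$d$ polarized K3 surfaces, with fibre over $[S,l]$ equal to $\widetilde{\br}(S)[r]=\hom(\HH^2(S,\Z)_{\prim},\Z/r\Z)$, and then to cut out $\tM_d^r$ as the open sublocus where the extra datum has order exactly $r$.

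Concretely, I would work with the moduli stack $\mM_d$ of degree-$d$ polarized K3 surfaces and its universal family $\pi\colon\mathcal{S}\to\mM_d$, and form the local system $\mathcal{L}:=(R^2\pi_*\Z)_{\prim}$, whose stalks are isomorphic to the K3 lattice $\Lambda_d$. Applying $\sheafhom(-,\underline{\Z/r\Z})$ termwise gives a locally constant sheaf $\widetilde{\mathscr{B}}_r$ of finite abelian groups on $\mM_d$, whose stalk at $[S,l]$ is canonically $\widetilde{\br}(S)[r]$. The associated étale space is a smooth Deligne--Mumford stack $\widetilde{\mM}_d[r]$ whose $T$-points are families of triples $(S,l,\alpha)$. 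Equivalently, via the period map, this étale cover admits the presentation
\[
\widetilde{\mM}_d[r]\;\cong\;\bigl[\,\Omega(\Lambda_d)\times\hom(\Lambda_d,\Z/r\Z)\,\bigr]\big/\stO(\Lambda_d),
\]
where $\stO(\Lambda_d)$ acts diagonally (by precomposition on the second factor). Since Baily--Borel gives $\Omega(\Lambda_d)/\stO(\Lambda_d)=\tM_d$ a quasi-projective structure and $\hom(\Lambda_d,\Z/r\Z)$ is finite, the diagonal quotient inherits a quasi-projective scheme structure finite over $\tM_d$; this is the candidate $\tM_d[r]$. A standard comparison of families with their period data verifies that it corepresents the moduli functor.

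For the subscheme $\tM_d^r$, the condition that $\alpha$ has order exactly $r$ is open: its complement is the finite union $\bigcup_{r'\mid r,\;r'<r}\tM_d[r']$ obtained by pulling back the surjections $\hom(\Lambda_d,\Z/r\Z)\twoheadrightarrow\hom(\Lambda_d,\Z/r'\Z)$, which is closed in $\tM_d[r]$. Hence $\tM_d^r$ is an open subscheme of $\tM_d[r]$, and it inherits the coarse moduli property. The step I expect to cost the most work is upgrading the analytic, stack-theoretic construction to an algebraic scheme which corepresents the functor on arbitrary base schemes rather than merely classifying $\C$-points: one needs a relative algebraic incarnation of $\widetilde{\mathscr{B}}_r$ that behaves well under base change, and must check that polarized automorphisms of $(S,l)$ acting non-trivially on $\widetilde{\br}(S)[r]$ are encoded correctly by the diagonal $\stO(\Lambda_d)$-action, so that isomorphism classes of triples match closed points exactly.
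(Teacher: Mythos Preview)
Your proposal is essentially the same construction the paper summarizes from \cite{BrakkeeTwistedK3}: one forms the product of the marked parameter space with the finite set $\hom(\Lambda_d,\Z/r\Z)$ and takes the quotient by the diagonal $\stO(\Lambda_d)$-action, then decomposes into components $\tM_w$ indexed by orbits $[w]$; the subspace $\tM_d^r$ is the union of components with $\operatorname{ord}(w)=r$, which is indeed open and closed. The only cosmetic difference is that the paper works directly with the fine moduli space $\tM_d^{\mar}$ of \emph{marked} polarized K3 surfaces rather than the stack $\mM_d$ or the full period domain $\Omega(\Lambda_d)$; since $\tM_d^{\mar}$ is already a scheme and the $\stO(\Lambda_d)$-action is properly discontinuous, the quotient is immediately a quasi-projective scheme, which cleanly dissolves the schemification concern you flag at the end.
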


The spaces $\tM_d[r]$ and $\tM_d^r$ are constructed as follows.
Let $\tM_d^{\mar}$ be the (fine) moduli space of triples $(S,l,\varphi)$
where $(S,l)$ is as before and $\varphi$ is an isomorphism
$\HH^2(S,\Z)_{\prim}\cong\Lambda_d$. Note that $\varphi$ induces an isomorphism $\varphi_r\colon \br(S)[r]\to\hom(\Lambda_d,\Z/r\Z)$. The group $\stO(\Lambda_d)$ induces an action on $\hom(\Lambda_d,\Z/r\Z)$,
and on the product
\[\tM_d^{\mar}[r]:=\tM_d^{\mar}\times\hom(\Lambda_d,\Z/r\Z)\]
by
\[g(S,l,\varphi,\alpha) = (S,l,g\circ\varphi, \varphi^{-1}_rg\varphi_r(\alpha)).\]
The space $\tM_d[r]$ is the quotient $\tM_d^{\mar}[r]/\stO(\Lambda_d)$.

For $w\in\hom(\Lambda_d,\Z/r\Z)$,
denote by $\stab(w)\subset\stO(\Lambda_d)$ its stabilizer under the action of $\stO(\Lambda_d)$. 
Then $\tM_d[r]$ is a disjoint union
$\coprod_{[w]}\tM_w$
where $[w]\in \hom(\Lambda_d,\Z/r\Z)/\stO(\Lambda_d)$
and
\[\tM_w=(\tM_d^{\mar}\times\{w\})/\stab(w).\]
Each component $\tM_w$ is an irreducible quasi-projective variety with at most finite quotient singularities \cite[Corollary~2.2]{BrakkeeTwistedK3}.
It parametrizes triples $(S,l,\alpha)$ that admit a marking $\varphi$ such that $\varphi_r(\alpha) = w$.
The space $\tM_d^r$ is the union of those $\tM_w$ for which $w$ has order $r$.

\medskip
Given $(S,l)\in\tM_d$ and $\alpha\in\widetilde{\br}(S)[r] \cong\frac1r\HH^2(S,\Z)^{\dual}_{\prim}/\HH^2(S,\Z)_{\prim}^{\dual}$, there is an associated Hodge structure $\widetilde{\HH}(S,\alpha,\Z)$ of K3 type on the full cohomology $\HH^*(S,\Z)$ of $S$. 
Namely, fix a lift of $\alpha$ to $\frac1r\HH^2(S,\Z)_{\prim}^{\dual}\subset\HH^2(S,\Q)$, that we will also denote by $\alpha$.
Then $\widetilde{\HH}(S,\alpha,\Z)$ is defined by
\[\widetilde{\HH}{}^{2,0}(S,\alpha):=\C[\sigma+\alpha\wedge\sigma]\subset\HH^*(S,\Z),\]
where $\sigma$ is a non-degenerate holomorphic 2-form on $S$.
If $\alpha$ maps to $\alpha'$ under $\widetilde{\br}(S)[r]\twoheadrightarrow\br(S)[r]$, then $\widetilde{\HH}(S,\alpha,\Z)$ is isomorphic to the Hodge structure $\widetilde{\HH}(S,\alpha',\Z)$ defined by $\alpha'$ as in \cite[Section~4]{GeneralizedCY}.

\medskip
Denote by $\widetilde{\Lambda}$ the extended K3 lattice. 
Let $w\in\hom(\Lambda_d,\Z/r\Z)$ and denote by $T_w$ the finite-index sublattice $\ker(w)\subset\Lambda_d$.
Using the above one shows that, up to an identification of $T_w$ with $\exp(w)\Lambda_d\cap\widetilde{\Lambda}$ (see \cite[Section~3.1]{BrakkeeTwistedK3}), there is a holomorphic, injective period map
\begin{align*} 
 \tM_d^{\mar}\times\{w\}&\to\Omega(T_w)\\
 (S,l,\varphi,w)&\mapsto \widetilde{\varphi}\left(\widetilde{\HH}{}^{2,0}(S,\varphi_r^{-1}(w)\right).
\end{align*}
It induces an algebraic embedding
$\tM_w\hookrightarrow\Omega(T_w)/\stab(w)$.

\medskip
For later use, we define the \emph{Picard group} of a twisted K3 surface as
\[\pic(S,\alpha):=\widetilde{\HH}{}^{1,1}(S,\alpha)\cap \widetilde{\HH}(S,\alpha,\Z)\] and its \emph{transcendental lattice} $T(S,\alpha)$ as the orthogonal complement of $\pic(S,\alpha)$ in $\widetilde{\HH}(S,\alpha,\Z)$. When $\alpha$ is trivial, we have $\pic(S,\alpha) = \HH^0(S,\Z)\oplus\HH^{1,1}(S,\Z)\oplus\HH^4(S,\Z)$ and $T(S,\alpha) = T(S)$. One can show that there is an isomorphism of lattices $T(S,\alpha)\cong \ker(\alpha\colon T(S)\to\Q/\Z)$.

\subsection{Twisted K3 surfaces associated to GM fourfolds}
\label{section_GMvstwistedK3}
Recall \cite[Definition~3.11]{Pert2} that if $X$ is a Hodge-special GM fourfold, a twisted K3 surface $(S,\alpha)$ is said to be associated to $X$ when
there is a Hodge isometry 
\[\widetilde{\HH}(\Ku(X),\Z)\cong \widetilde{\HH}(S,\alpha,\Z).\]
Note that if $d$ is the degree and $r$ the order of $(S,\alpha)$,
then $X$ has discriminant $dr^2$.
One can show \cite[Theorem 1.1]{Pert2} that $X$ has an associated twisted K3 surface if and only if the period point of $X$ lies in $\mD_{d'}$ for some $d'$ satisfying
\begin{equation}\tag{$\ast\ast'$}
d' = \prod_i p_i^{n_i} \text{ with } n_i\equiv 0 \mod 2\text{ for }p_i\equiv 3\mod 4
\end{equation}
where the $p_i$ are distinct primes. Note that this is equivalent to the following: $d'$ is of the form $dr^2$ for some integers $d$ and $r$, where $d$ satisfies $(\ast\ast)$. This decomposition $d'=dr^2$ is however not unique.

We will prove that $(\ast\ast')$ is equivalent to a condition on the lattice $L_{d'}^{\perp}$.
We need the following lemma. For $w\in\hom(\Lambda_d,\Z/r\Z)$ of order $r$, let $T_w\subset\Lambda_d$ be the index-$r$ sublattice $\ker(w)$.
We embed $T_w$ primitively into $\widetilde{\Lambda}$ using the map $\exp(w)$ (see \cite[Section~3.1]{BrakkeeTwistedK3}).
\begin{lemma}\label{OrthGpSurj}
 Let $S_w:=T_w^{\perp}\subset\widetilde{\Lambda}$. The canonical map 
 $\tO(S_w)\to\tO(\Disc S_w)$ is surjective.
\end{lemma}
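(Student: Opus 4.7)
The plan is to derive the surjectivity for $S_w$ from the analogous statement for its much larger orthogonal complement $T_w$, using that $\widetilde{\Lambda}$ is unimodular so that $T_w$ and $S_w$ control each other's discriminant forms.

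First I would control the length of $\Disc T_w$. Since $T_w=\ker(w\colon\Lambda_d\to\Z/r\Z)$ has index $r$ in $\Lambda_d$, it inherits rank $21$ and signature $(2,19)$. The chain $T_w\subset\Lambda_d\subset\Lambda_d^{\dual}\subset T_w^{\dual}$ has successive quotients of orders $r$, $d$, $r$ -- the middle one being $\Disc\Lambda_d\cong\Z/d\Z$ (concentrated in the $I_1(-d)$-summand, as $E_8$ and $U$ are unimodular) -- so $|\Disc T_w|=dr^2$ and this filtration exhibits $\Disc T_w$ as an extension with cyclic successive quotients. In particular $\ell(\Disc T_w)\leq 3$.

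Second I would invoke Nikulin's theory \cite{Nikulin}. Since $T_w$ is even and indefinite with $\rk(T_w)=21\geq\ell(\Disc T_w)+2$, \cite[Theorem~1.14.2]{Nikulin} yields that $\tO(T_w)\twoheadrightarrow\tO(\Disc T_w)$ is surjective, while \cite[Theorem~1.14.4]{Nikulin} gives that the primitive embedding $T_w\hookrightarrow\widetilde{\Lambda}$ is unique up to the action of $\tO(\widetilde{\Lambda})$. For $g\in\tO(T_w)$, the two primitive embeddings of $T_w$ in $\widetilde{\Lambda}$ given by the inclusion $\iota$ and by $\iota\circ g$ have the same image; uniqueness then produces $\widetilde{g}\in\tO(\widetilde{\Lambda})$ with $\widetilde{g}\circ\iota=\iota\circ g$, so that $\widetilde{g}(T_w)=T_w$ and $\widetilde{g}|_{T_w}=g$.

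Third I would pass from $T_w$ to $S_w$. Since $\widetilde{\Lambda}$ is unimodular and $T_w,S_w$ are mutually orthogonal primitive sublattices, the standard glueing \cite[Proposition~1.6.1]{Nikulin} supplies a canonical anti-isometry $\Disc T_w\cong\Disc S_w$, which induces an identification $\tO(\Disc T_w)\cong\tO(\Disc S_w)$. Given $\phi\in\tO(\Disc S_w)$, transport it to $\phi'\in\tO(\Disc T_w)$, lift $\phi'$ to $g\in\tO(T_w)$ via the surjectivity, and extend to $\widetilde{g}\in\tO(\widetilde{\Lambda})$ fixing $T_w$ as above. Then $\widetilde{g}(S_w)=\widetilde{g}(T_w^{\perp})=S_w$, and the action of $\widetilde{g}|_{S_w}$ on $\Disc S_w$ matches under the anti-isometry the action $\bar{g}=\phi'$ of $g$ on $\Disc T_w$, hence corresponds to $\phi$. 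Thus $\widetilde{g}|_{S_w}\in\tO(S_w)$ realises $\phi$, proving surjectivity.

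The main difficulty I foresee is the first step -- establishing the bound $\ell(\Disc T_w)\leq 3$ directly from the presentation $T_w=\ker(w)$ -- after which the cited Nikulin results take care of the rest.
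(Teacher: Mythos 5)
Your first and third steps are sound: the filtration $T_w\subset\Lambda_d\subset\Lambda_d^{\dual}\subset T_w^{\dual}$ does give $\ell(\Disc T_w)\leq 3$, so \cite[Theorem~1.14.2]{Nikulin} applies to the rank-$21$ indefinite lattice $T_w$, and the glueing anti-isometry $\Disc T_w\cong\Disc S_w$ is standard. The gap is the appeal to \cite[Theorem~1.14.4]{Nikulin} in the second step. The numerical hypothesis of that theorem compares $\ell(\Disc T_w)$ with the \emph{corank} of the embedding, not with $\rk T_w$: one needs $\ell(\Disc T_w)+2\leq\rk\widetilde{\Lambda}-\rk T_w=3$, i.e.\ $\Disc T_w$ cyclic. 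By \cite[Proposition~6.5]{debarre_iliev_manivel_2015} the group $\Disc T_w\cong\Disc L_{d'}$ can equal $(\Z/2\Z)^2\times\Z/(d'/4)\Z$, of length $2$ or $3$ (this really occurs in the twisted setting, e.g.\ $d'=16=4\cdot 2^2$), and then the hypothesis fails. Moreover the step is circular in substance: by \cite[Corollary~1.5.2]{Nikulin}, an isometry $g\in\tO(T_w)$ extends to $\widetilde{\Lambda}$ if and only if the image of $\bar g$ under the glueing identification $\tO(\Disc T_w)\cong\tO(\Disc S_w)$ lies in the image of $\tO(S_w)\to\tO(\Disc S_w)$. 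Since by your first step $\tO(T_w)\to\tO(\Disc T_w)$ is surjective, the assertion that \emph{every} $g\in\tO(T_w)$ extends (equivalently, that $\iota$ and $\iota\circ g$ are always conjugate under $\tO(\widetilde{\Lambda})$) is precisely equivalent to the surjectivity of $\tO(S_w)\to\tO(\Disc S_w)$ that you are trying to prove; the entire content of the lemma is hidden in the citation of Theorem~1.14.4, whose hypotheses do not hold here.

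For comparison, the paper works directly with the rank-$3$ lattice $S_w$, which has signature $(2,1)$: when $\Disc S_w$ is cyclic one has $\rk S_w=3\geq\ell+2$ and \cite[Theorem~1.14.2]{Nikulin} applies to $S_w$ itself; in the remaining case $\Disc S_w\cong(\Z/2\Z)^2\times\Z/(d'/4)\Z$ the Nikulin criterion is unavailable and the surjectivity is taken from the finer computation of \cite[Corollary~VIII.7.8]{MiMo}. Your argument, once the hypotheses are checked, only goes through in the cyclic case, where it gives a more roundabout proof of what Theorem~1.14.2 already yields directly.
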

\begin{proof}
Note that $S_w$ has rank 3 and its discriminant group is isomorphic to $\Disc T_w$.
By \cite[Proposition~6.5]{debarre_iliev_manivel_2015}, this group is either cyclic or isomorphic to $(\Z/2\Z)^2\times\Z/(d'/4)\Z$. In the first case, the statement follows from \cite[Theorem~1.14.2]{Nikulin}. In the second case, it follows from \cite[Corollary~VIII.7.8]{MiMo}.
\end{proof}

\begin{cor}\label{EquivAssociated}
Consider an integer $d'>8$ with $d' \equiv 0,2,4 \mod 8$. Then $d'$ satisfies $(\ast\ast')$ if and only if for some decomposition $d'=dr^2$ with $d$ satisfying $(\ast\ast)$, there is a $w\in\hom(\Lambda_d,\Z/r\Z)$ and a lattice isometry $L_{d'}^{\perp}(-1)\cong T_w$.
\end{cor}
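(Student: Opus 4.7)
The implication $(\Leftarrow)$ is immediate from the discussion preceding the corollary: the existence of a decomposition $d'=dr^2$ with $d$ satisfying $(\ast\ast)$ is by itself equivalent to $d'$ satisfying $(\ast\ast')$.

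For $(\Rightarrow)$, assume $d'$ satisfies $(\ast\ast')$ and fix a decomposition $d'=dr^2$ with $d$ satisfying $(\ast\ast)$. The plan is to reduce the existence of $w$ and of the isometry $L_{d'}^\perp(-1)\cong T_w$ to a matching of discriminant forms, via Nikulin's classification of indefinite lattices. Both $L_{d'}^\perp(-1)$ and $T_w$ (for any $w\in\hom(\Lambda_d,\Z/r\Z)$ of order $r$) are even, indefinite, of rank $21$ and signature $(2,19)$; moreover the discriminant group of either has length at most $3$, since each is primitive inside a unimodular lattice ($\Lambda$ and $\widetilde{\Lambda}$, respectively) with rank-$3$ orthogonal complement. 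Since $21\geq 2+3$, by \cite[Theorem~1.14.2]{Nikulin} each of these lattices is determined in its genus by its discriminant form, so it is enough to exhibit $w$ of order $r$ such that $\Disc(T_w)\cong\Disc(L_{d'}^\perp(-1))$ as finite quadratic forms.

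The two discriminant forms can then be computed explicitly. By unimodularity of $\Lambda$ one has $\Disc(L_{d'}^\perp)\cong -\Disc(L_{d'})$, hence $\Disc(L_{d'}^\perp(-1))\cong\Disc(L_{d'})$, which is read off the Gram matrices in Section~\ref{section_HspGM} and depends only on $d'\mod 8$. On the other side, the chain $T_w\subset\Lambda_d\subset\Lambda_d^\dual\subset T_w^\dual$ presents $\Disc(T_w)$ as an extension of $\Disc(\Lambda_d)\cong\langle -1/d\rangle$ by the cyclic group $\Lambda_d/T_w$ of order $r$, sitting isotropically in $\Disc(T_w)$, with precise form determined by $w$. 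The main obstacle is a case-by-case verification, according to $d'\equiv 0,2,4\mod 8$, that some $w$ of order $r$ realises $\Disc(L_{d'})$; here the hypothesis $d\in(\ast\ast)$ is used through the isomorphism $L_d^\perp\cong\Lambda_d(-1)$ to transport the ``extra'' generator of $L_{d'}$ (beyond $\Lambda_G\subset L_d$) into $\Lambda_d$ and read off the required functional $w\colon\Lambda_d\to\Z/r\Z$. Alternatively, this matching can be obtained by invoking \cite[Theorem~1.1]{Pert2}: under $(\ast\ast')$ a very general $X\in\mD_{d'}$ admits an associated twisted K3 surface $(S,\alpha)$ of degree $d$ and order $r$, and the resulting Hodge isometry $\widetilde{\HH}(\Ku(X),\Z)\cong\widetilde{\HH}(S,\alpha,\Z)$ restricts on transcendental lattices to the desired isometry $L_{d'}^\perp(-1)\cong T_w$, where $w=\varphi_r(\alpha)$ for a marking $\varphi$ of $(S,l)$.
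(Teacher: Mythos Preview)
Your proof is correct overall, and your alternative for $(\Rightarrow)$ via \cite[Theorem~1.1]{Pert2} is exactly the paper's argument. The differences worth noting are the following.

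For $(\Leftarrow)$ you take a shorter route than the paper: you simply observe that the hypothesis already includes a decomposition $d'=dr^2$ with $d$ satisfying $(\ast\ast)$, which the paper itself states (just before the corollary) is equivalent to $(\ast\ast')$. The paper instead argues geometrically: using the isometry $L_{d'}^\perp(-1)\cong T_w$ and Lemma~\ref{OrthGpSurj}, it builds an associated twisted K3 surface for a very general $X\in\mD_{d'}$ and then appeals to \cite[Theorem~1.1]{Pert2}. Your observation suffices for the bare implication; the paper's longer argument has the side benefit of showing that the transcendental isometry extends to the full Mukai lattices, which is precisely what is reused in Corollary~\ref{RatMapTwisted}.

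For $(\Rightarrow)$, your primary lattice-theoretic plan (reduce to matching discriminant forms via Nikulin's uniqueness in the genus) is sound in outline: both lattices are even of signature $(2,19)$ with discriminant length at most $3$, so they are determined by their discriminant forms. However, the hint you give for producing the correct $w$ does not parse: there is no containment $L_d\subset L_{d'}$ (both are rank-$3$ primitive sublattices of $\Lambda$ containing $\Lambda_G$), so ``the extra generator of $L_{d'}$ beyond $\Lambda_G\subset L_d$'' and the transport into $\Lambda_d$ via $L_d^\perp\cong\Lambda_d(-1)$ do not make sense as written. A genuine direct argument would have to manufacture an index-$r$ inclusion $L_{d'}^\perp(-1)\hookrightarrow\Lambda_d$ by other means. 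Since you explicitly fall back on the paper's geometric argument, this does not affect correctness, but the first paragraph of your $(\Rightarrow)$ should be regarded as a heuristic rather than a proof.
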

\begin{proof}
 Suppose $d'$ satisfies $(\ast\ast')$. Let $X$ be a very general GM fourfold with period point in $\mD_{d'}$, which exists by \cite[Theorem 8.1]{debarre_iliev_manivel_2015},
 and let $(S,\alpha,\Z)$ be a twisted K3 surface associated to $X$.
 Let $d$ be the degree of $S$ and $r$ the order of $\alpha$, so $d'=dr^2$.
 Then the Hodge isometry 
 $\widetilde{\HH}(\Ku(X),\Z)\cong\widetilde{\HH}(S,\alpha,\Z)$ induces a lattice isometry of the transcendental parts:
 \[L_{d'}^{\perp}(-1)\cong T(S,\alpha)\cong\ker(\alpha\colon\HH^2(S,\Z)_{\prim}\to\Z/r\Z).\]
Now any marking $\HH^2(S,\Z)_{\prim}\cong\Lambda_d$ induces an isometry
 $L_{d'}^{\perp}(-1)\cong T_w$ for some $w\in\hom(\Lambda_d,\Z/r\Z)$.
 
 Vice versa, assume we have an isometry $L_{d'}^{\perp}(-1)\cong T_w$ as above. Then the associated period domains $\Omega(L_{d'}^{\perp}(-1))\cong \Omega(L_{d'}^{\perp})$ and $\Omega(T_w)$ are also isomorphic. It follows that if $X$ is a very general GM fourfold of discriminant $d'$, then $L_{d'}^{\perp}\subset\HH^4(X,\Z)$ is Hodge isometric, up to a sign and a Tate twist, to $T(S,\alpha)$ for some twisted K3 surface $(S,\alpha)$ of degree $d$ and order $r$. By Lemma \ref{OrthGpSurj}, this can be extended to a Hodge isometry
 $\widetilde{\HH}(\Ku(X),\Z)\cong \widetilde{\HH}(S,\alpha,\Z)$. Since $X$ is very general in $\mD_{d'}$, so its period lies in $\mD_e$ if and only if $e=d'$, it follows that $d'$ satisfies $(\ast\ast')$.
\end{proof}

Assume we are in the situation of Corollary \ref{EquivAssociated}, so $d$ satisfies $(\ast\ast')$ and we have a fixed $w\in\hom(\Lambda_d,\Z/r\Z)$ such that $L_{d'}^{\perp}(-1)$ is isomorphic to $T_w$.
This induces an isomorphism
$\Omega(T_w)/\stO(T_w)\cong\Omega(L_{d'}^{\perp})/H(L_{d'})=\mD_{L_{d'}}^{\lab}$.
Next, note that $\stO(T_w)$ is a subgroup of $\stab(w)$ \cite[Lemma~4.1]{BrakkeeTwistedK3}.
Summarized in a diagram, we have

\begin{equation}   
\label{cd_twistedperiod}
\xymatrix@C=0em{&&& \Omega(T_w)\ar[d] \ar[rrr]^{\cong}  &&& \Omega(L_{d'}^{\perp})\ar[d]\ar@{^{(}->}[rrrrr] &&&&& \Omega(\Lambda_{00})\ar[d]\\
 &&& \Omega(T_w)/\stO(T_w)\ar[d]_{\pi}\ar[rrr]^-{\cong} &&& \mD_{L_{d'}}^{\lab}\ar[rrrr]^-{\nu} &&&& \mD_{L_{d'}}\ar@{^{(}->}[r]&\mD \\
 \tM_w\ar@{^{(}->}[rrr] &&& \Omega(T_w)/\stab(w) &&&&&&&&&
 }
\end{equation}
where $\pi\colon\Omega(T_w)/\stO(T_w)\to \Omega(T_w)/\stab(w)$ is a finite map. As in Section \ref{DefRationalMap}, we obtain a finite dominant rational map $\mD_{L_{d'}}\dashrightarrow\tM_w$.
\begin{cor}\label{RatMapTwisted}
There is a dominant rational map 
 \[\delta_{d'}\colon \mM_4\times_{\mD}\mD_{d'}\dashrightarrow\tM_w\]
which sends a very general Hodge-special GM fourfold $X$ of discriminant $d'$ to a polarized twisted K3 surface associated to $X$.
\end{cor}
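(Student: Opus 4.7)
The argument will follow the same template as the construction of $\gamma_d$ in Section \ref{DefRationalMap}. In fact, the paragraph immediately preceding the corollary already produces a finite dominant rational map $\mD_{L_{d'}}\dashrightarrow\tM_w$ from diagram \eqref{cd_twistedperiod}, by inverting the birational normalization $\nu$ (Lemma \ref{lemma_mapnu}), composing with the isomorphism $\mD_{L_{d'}}^{\lab}\cong \Omega(T_w)/\stO(T_w)$ and the finite map $\pi$, and restricting to the open locus $\tM_w\subset\Omega(T_w)/\stab(w)$. So the real content of the corollary is to lift this map to the moduli stack of GM fourfolds and to identify the image of a generic point as an associated twisted K3 surface.

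First I would split into parity cases. When $d'\equiv 0\mod 4$ we have $\mD_{d'}=\mD_{L_{d'}}$ and the construction applies directly. When $d'\equiv 2\mod 8$ we have $\mD_{d'}=\mD'_{d'}\cup\mD''_{d'}$, and we apply the construction to each component using the corresponding embedding of $L_{d'}$ into $\Lambda$. In either case, pulling back along the period map $p\colon \mM_4\times_\mD \mD_{d'}\to \mD_{d'}$, which is dominant with smooth $4$-dimensional fibers (Section \ref{section_HspGM}), produces the rational map $\delta_{d'}\colon \mM_4\times_\mD\mD_{d'}\dashrightarrow\tM_w$. Dominance of $\delta_{d'}$ follows from dominance of $p$ and of $\mD_{L_{d'}}\dashrightarrow\tM_w$.

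Next, I would check that for a very general $X\in \mM_4\times_\mD\mD_{d'}$, the point $\delta_{d'}(X)$ really represents a polarized twisted K3 surface $(S,l,\alpha)$ of degree $d$ and order $r$ associated to $X$ in the sense of Section \ref{section_GMvstwistedK3}. By construction of the map through diagram \eqref{cd_twistedperiod}, the Hodge structure on $T_w\subset\widetilde{\Lambda}$ coming from $(S,l,\alpha)$ corresponds, via the fixed isometry $L_{d'}^{\perp}(-1)\cong T_w$, to the transcendental Hodge structure on $L_{d'}^{\perp}\subset \HH^4(X,\Z)$ up to a sign and a Tate twist. The surjectivity of $\tO(S_w)\to\tO(\Disc S_w)$ from Lemma \ref{OrthGpSurj} then extends this isometry to the full Mukai lattices, producing a Hodge isometry $\widetilde{\HH}(\Ku(X),\Z)\cong\widetilde{\HH}(S,\alpha,\Z)$, exactly as in the proof of Corollary \ref{EquivAssociated}. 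That $\alpha$ has order exactly $r$, so that $\delta_{d'}(X)$ lands in $\tM_w$ and not merely in $\tM_d[r]$, is automatic from the choice of $w$ of order $r$ together with the fact that a very general such $X$ has Picard rank one on its associated K3 surface.

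The main subtlety, rather than obstacle, is the bookkeeping of non-canonical choices entering the construction: the decomposition $d'=dr^2$, the class $w\in\hom(\Lambda_d,\Z/r\Z)$, and the lattice isometry $L_{d'}^{\perp}(-1)\cong T_w$ provided by Corollary \ref{EquivAssociated}. These produce genuinely different maps $\delta_{d'}$, just as $\gamma_d$ in the untwisted setting was non-unique; but this ambiguity does not affect either the existence, the dominance, or the associated-K3 property of the map.
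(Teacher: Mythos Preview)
Your proposal is correct and follows essentially the same approach as the paper: the corollary is stated without a separate proof, its content being assembled from diagram \eqref{cd_twistedperiod}, the sentence ``As in Section \ref{DefRationalMap}, we obtain a finite dominant rational map $\mD_{L_{d'}}\dashrightarrow\tM_w$,'' and the paragraph after the corollary that invokes Lemma \ref{OrthGpSurj} to extend the Hodge isometry. You have simply made the parity case split and the dominance argument more explicit than the paper does.
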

The map is defined whenever $\rk\HH^{2,2}(X,\Z)=3$.
When $\rk \HH^{2,2}(X,\Z)>3$
and the map is defined at $X$, then the image of $X$ is a triple $(S,l,\alpha)\in \tM_w$ such that if $\alpha'\in\br(S)[r]$ is the image of $\alpha\in\widetilde{\br}(S)[r]$, then $(S,l,\alpha')$ is associated to $X$. 
Namely, by Lemma \ref{OrthGpSurj}, the Hodge isometry
\[\HH^4(X,\Z)_{00}\supset L_{d'}^{\perp}\cong T_w\subset \widetilde{\HH}(S,\alpha,\Z)\] 
extends to a Hodge isometry
$\widetilde{\HH}(\Ku(X),\Z)\cong\widetilde{\HH}(S,\alpha,\Z)=\widetilde{\HH}(S,\alpha',\Z)$.

\subsection{Fourier--Mukai partners in the twisted case}
In this section we apply Corollary \ref{RatMapTwisted} to construct Fourier--Mukai partners of a very general GM fourfold with a twisted associated K3 surface. 

First, we need the following lemma, which is the analogue of \cite[Lemma 2.3]{Huy} in the case of cubic fourfolds.

\begin{lemma}
\label{lemma_inverseorientation}
The Mukai lattice $\widetilde{\HH}(\emph{Ku}(X),\Z)$ of a GM fourfold $X$ has an orientation reversing Hodge isometry.
\end{lemma}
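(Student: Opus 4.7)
The plan is to imitate the argument of \cite[Lemma 2.3]{Huy} in the cubic fourfold case, replacing the rank-$2$ positive definite sublattice $A_2\subset\widetilde{\mathrm{H}}(\Ku(Y),\Z)$ used there by the rank-$2$ positive definite sublattice $A_1^{\oplus 2}\subset\widetilde{\mathrm{H}}^{1,1}(\Ku(X),\Z)$ provided by \cite[Lemma 2.27]{kuznetsov_perry}. Denote its natural generators by $\lambda_1,\lambda_2$, and recall from \cite[Proposition 3.1]{Pert2} that $A_1^{\oplus 2}$ is primitively embedded in $\widetilde{\mathrm{H}}(\Ku(X),\Z)$ with orthogonal complement Hodge isometric, up to sign and Tate twist, to $\HH^4(X,\Z)_{00}$.

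The first step is to define the involution $\psi$ of $A_1^{\oplus 2}$ by $\psi(\lambda_1)=\lambda_1$, $\psi(\lambda_2)=-\lambda_2$, and to extend $\psi\oplus\id_{(A_1^{\oplus 2})^\perp}$ to an isometry $\phi$ of the full Mukai lattice $\widetilde{\mathrm{H}}(\Ku(X),\Z)\cong U^{\oplus 4}\oplus E_8(-1)^{\oplus 2}$. Unimodularity implies that the inclusion $A_1^{\oplus 2}\oplus (A_1^{\oplus 2})^\perp\hookrightarrow \widetilde{\mathrm{H}}(\Ku(X),\Z)$ corresponds to the graph of an anti-isometry between $\Disc(A_1^{\oplus 2})\cong(\Z/2\Z)^2$ and $\Disc((A_1^{\oplus 2})^\perp)$. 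But $\psi$ acts trivially on $\Disc(A_1^{\oplus 2})$ because $-1\equiv 1\mod 2$, and the identity acts trivially on $\Disc((A_1^{\oplus 2})^\perp)$, so the two actions are automatically compatible and glue to an isometry $\phi$ by \cite[Corollary 1.5.2]{Nikulin}.

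Next I would verify that $\phi$ is a Hodge isometry and orientation reversing. The former is immediate: since $A_1^{\oplus 2}\subset\widetilde{\mathrm{H}}^{1,1}(\Ku(X))$, both $\widetilde{\mathrm{H}}^{2,0}$ and $\widetilde{\mathrm{H}}^{0,2}$ lie in $(A_1^{\oplus 2})^\perp\otimes\C$, where $\phi$ is the identity, so the Hodge decomposition is preserved. For the orientation, I would observe that the positive $4$-space $P\subset\widetilde{\mathrm{H}}(\Ku(X),\mathbb{R})$ can be written as the orthogonal direct sum $\mathrm{span}_\mathbb{R}(\mathrm{Re}(\sigma),\mathrm{Im}(\sigma))\oplus (A_1^{\oplus 2}\otimes \mathbb{R})$, the second summand being a maximal positive subspace of $(\widetilde{\mathrm{H}}^{1,1})_\mathbb{R}$ since the latter has signature $(2,20)$; on this decomposition $\phi$ acts as $\id\oplus \mathrm{diag}(1,-1)$, giving $\det(\phi|_P)=-1$.

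The only mildly delicate step is the Nikulin gluing, but this is immediate from the triviality of the $\psi$-action on the $2$-torsion discriminant group; everything else is a direct linear-algebra verification.
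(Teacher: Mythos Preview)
Your proposal is correct and follows essentially the same approach as the paper: both construct the isometry by negating one of the two generators of the positive definite sublattice $A_1^{\oplus 2}\subset\widetilde{\mathrm{H}}^{1,1}(\Ku(X),\Z)$, use that this acts trivially on $\Disc(A_1^{\oplus 2})\cong(\Z/2\Z)^2$ to glue with the identity on the orthogonal complement via Nikulin's criterion, and conclude that the resulting isometry preserves the Hodge structure and reverses the orientation of the positive $4$-space. Your write-up is in fact slightly more explicit than the paper's about the positive $4$-space decomposition and the determinant computation, but the idea is identical.
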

\begin{proof}
Denote by $\lambda_1$ and $\lambda_2$ the standard generators of $A_1^{\oplus 2} \subset \widetilde{\HH}(\Ku(X),\Z)$. Consider the isometry $g \in \tO(A_1^{\oplus 2})$ defined by
$$g(\lambda_1)=-\lambda_1 \text{ and }g(\lambda_2)=\lambda_2.$$
Since $g$ acts trivially on the discriminant group of $A_1^{\oplus 2}$, by \cite[Prop. 1.6.1 and Cor. 1.5.2]{Nikulin} there is an isometry $\tilde{g}$ of $\widetilde{\Lambda}:=U^{\oplus 4} \oplus E_8(-1)^{\oplus 2}$ extending $g$ and acting trivially on $A_1^{\oplus 2 ^\perp}$. By definition $\tilde{g}$ reverses the orientation of the two positive directions in $A_1^{\oplus 2}$ and preserves the orientation of the two positive directions in $A_1^{\oplus 2 \perp}$. Moreover, $\tilde{g}$ preserves the Hodge structure as it acts trivially on $A_1^{\oplus 2 \perp}$. This implies the statement.
\end{proof}

\begin{remark}
Note that there is an autoequivalence of $\Ku(X)$ which induces the Hodge isometry described in Lemma \ref{lemma_inverseorientation}. Indeed, consider the composition $\mathbb{L}_{\langle \mathcal{O}_X, \mathcal{U}_X^*,\mathcal{O}_X(1) \rangle} \circ  (\mathbb{D}(-) \otimes \mathcal{O}_X(1))$, where $\mathbb{D}(-):=\textrm{R}Hom(-,\mathcal{O}_X)$ and $\mathbb{L}_{\langle \mathcal{O}_X, \mathcal{U}_X^*,\mathcal{O}_X(1) \rangle}$ is the left mutation functor through $\mathcal{O}_X, \mathcal{U}_X^*,\mathcal{O}_X(1)$. One can check that this composition induces an autoequivalence when restricted to $\Ku(X)$, acting on the Mukai lattice as required.
\end{remark}

We can now prove the following proposition which implies Theorem \ref{thm_FMptwisted}. Denote by $\varphi(r)$ the Euler function evaluated in $r$. Recall that a Fourier--Mukai partner of order $r$ of a twisted K3 surface $(S,\alpha)$ is a twisted K3 surface $(S',\alpha')$ with $\alpha'$ of order $r$ such that there is an equivalence $\D(S,\alpha) \xrightarrow{\sim} \D(S',\alpha')$.

\begin{proposition}
\label{prop_FMptwistedcase}
Let $d'=dr^2$ be a positive integer such that a very general GM fourfold of discriminant $d'$ admits an associated polarized twisted K3 surface of degree $d$ and order $r$. If $d' \equiv 0 \mod 4$ (resp.\ $d' \equiv 2 \mod 8$), then there are $m'$ (resp.\ $2m'$) fibers of the period map $p$ such that, when non-empty, their elements are Fourier--Mukai partners of $X$, where
\begin{equation}
\label{eq_m'}
m'=
\begin{cases}
\varphi(r)2^{\tau(d)-1} & \text{if } r=2 \text{ or } d>2 \\
\varphi(r)/ 2  & \text{if } r>2 \text{ and } d=2.
\end{cases}
\end{equation}
\end{proposition}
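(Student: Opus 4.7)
The plan is to mimic the argument of Proposition \ref{prop_FMpGM}, replacing $\gamma_d$ by the twisted rational map $\delta_{d'}\colon \mM_4 \times_{\mD} \mD_{d'}\dashrightarrow \tM_w$ of Corollary \ref{RatMapTwisted}. Fix such a $\delta_{d'}$ and set $(S,l,\alpha):=\delta_{d'}(X)$, a polarized twisted K3 surface of degree $d$ with $\alpha$ of order $r$. Very generality of $X$ forces $\pic(S)$ to have rank one, and by the twisted analogue of \cite[Theorem 3.6]{Pert2} there is an exact Fourier--Mukai equivalence $\Ku(X)\xrightarrow{\sim} \D(S,\alpha)$. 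Assuming a twisted analogue of Oguiso's counting formula \cite[Proposition 1.10]{Oguiso}, which asserts that $(S,\alpha)$ admits exactly $m'$ isomorphism classes of Fourier--Mukai partners of order $r$ with $m'$ as in \eqref{eq_m'}, pick representatives $(S_1,l_1,\alpha_1):=(S,l,\alpha), (S_2,l_2,\alpha_2),\dots,(S_{m'},l_{m'},\alpha_{m'})$ equipped with their unique degree-$d$ polarizations.

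Each triple $(S_i,l_i,\alpha_i)$ defines a point in $\tM_w$, and via diagram \eqref{cd_twistedperiod} lifts to a period point in $\mD_{L_{d'}}\subset\mD$. When $d'\equiv 0\mod 4$ there is a single irreducible component $\mD_{d'}=\mD_{L_{d'}}$, producing exactly one such point $x_i\in \mD_{d'}$. When $d'\equiv 2\mod 8$, the two embeddings of $L_{d'}$ into $\Lambda$ produce the divisors $\mD_{d'}'$ and $\mD_{d'}''$, and accordingly two points $x_i'\in \mD_{d'}'$ and $x_i''\in \mD_{d'}''$, exactly as in the proof of Proposition \ref{prop_FMpGM}. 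For any GM fourfold $X'$ in the period fiber over one of these points, the chain
\[\Ku(X')\xrightarrow{\sim} \D(S_i,\alpha_i)\xrightarrow{\sim} \D(S,\alpha)\xrightarrow{\sim} \Ku(X),\]
combined with Lemma \ref{lemma_eqinduceisometry}, shows that $X'$ is a Fourier--Mukai partner of $X$, yielding $m'$ (resp.\ $2m'$) fibers of $p$ as required.

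The main obstacle is the counting formula \eqref{eq_m'} itself. Its proof should proceed by an arithmetic orbit count on primitive embeddings $T(S,\alpha)\hookrightarrow \widetilde{\Lambda}$: the factor $2^{\tau(d)-1}$ reproduces Oguiso's count of untwisted Fourier--Mukai partners of $S$, while the factor $\varphi(r)$ accounts for the $(\Z/r\Z)^*$-action on Brauer classes, since twisting by $k\alpha$ for $k\in (\Z/r\Z)^*$ yields an equivalent derived category but a generally non-isomorphic pair $(S,k\alpha)$. The exceptional denominator in the case $r>2$, $d=2$ reflects extra symmetries of the rank-two part of $\pic(S,\alpha)$ in degree two which collapse orbits. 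Both the sign ambiguity $\alpha\mapsto -\alpha$ (which produces the same twisted derived category) and the orientation-reversing Hodge isometry of Lemma \ref{lemma_inverseorientation} must be carefully factored out of the count, and this is the delicate arithmetic ingredient of the proof. Unlike Proposition \ref{prop_FMpGM}, we do not assert that all Fourier--Mukai partners of $X$ arise via this construction, since partners coming from associated twisted K3 surfaces of a different order $r'\neq r$ are a priori possible.
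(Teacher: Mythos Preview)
Your overall strategy is correct and matches the paper's: fix $\delta_{d'}$, set $(S,l,\alpha)=\delta_{d'}(X)$, invoke the equivalence $\Ku(X)\simeq\D(S,\alpha)$ (the paper cites \cite[Theorem 1.1]{Pert2} and \cite[Theorem 1.9]{PPZ} rather than a ``twisted analogue'' of Theorem~3.6), enumerate the order-$r$ Fourier--Mukai partners of $(S,\alpha)$, lift them through diagram \eqref{cd_twistedperiod}, and split into cases according to $d'\bmod 8$ exactly as in Proposition~\ref{prop_FMpGM}. Your final caveat about not claiming completeness is also the paper's (see the remark following the proposition).

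The one place where your proposal remains a sketch and the paper is sharper is the counting formula \eqref{eq_m'}. You treat it as an assumption and offer heuristics about $\varphi(r)$, $2^{\tau(d)-1}$, and the degree-$2$ collapse. The paper does not redo this arithmetic from scratch: it observes that Ma \cite[Proposition 4.3]{Ma} gives an \emph{upper bound} for the number of order-$r$ Fourier--Mukai partners, and then argues, following \cite[Proposition 4.4]{Pert1}, that this bound is attained. The two ingredients needed for attainment are precisely Lemma~\ref{OrthGpSurj} (surjectivity of $\tO(S_w)\to\tO(\Disc S_w)$, which lets one extend Hodge isometries of transcendental lattices to the full Mukai lattice) and Lemma~\ref{lemma_inverseorientation} (the orientation-reversing Hodge isometry, which disposes of the sign issue you mention). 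Finally, \cite[Proposition 4.7]{Pert1} identifies Ma's bound with the explicit expression \eqref{eq_m'}. So your intuition about what goes into the count is right, but you should replace the heuristic paragraph by these citations; in particular, Lemma~\ref{OrthGpSurj} does double duty here (you only invoke it implicitly via the diagram), and Ma's result is the missing reference that makes the orbit count rigorous.
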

\begin{proof}
We fix a rational map $\delta_{d'}\colon \mM_4\times_{\mD}\mD_{d'}\dashrightarrow\tM_w$ as in Corollary \ref{RatMapTwisted}. 
Let $X$ be a GM fourfold as in the statement and consider the twisted degree-$d$ polarized K3 surface $\delta_{d'}(X)=(S,\alpha)$ with $\text{ord}(\alpha)=r$. Note that $S$ has Picard rank $1$ and by \cite[Theorem 1.1]{Pert2} and \cite[Theorem 1.9]{PPZ} there is an equivalence $\Ku(X) \xrightarrow{\sim} \D(S,\alpha)$. Let $m'$ be the number of Fourier--Mukai partners of $(S,\alpha)$ of order $r$. 
By Lemmas \ref{OrthGpSurj} and \ref{lemma_inverseorientation}, arguing as in \cite[Proposition 4.4]{Pert1}, one can show that $m'$ is equal to the upper bound given in \cite[Proposition 4.3]{Ma}. Moreover, by \cite[Proposition 4.7]{Pert1} this number is given by \eqref{eq_m'} as in the statement. Then using diagram \eqref{cd_twistedperiod} and arguing as in Proposition \ref{prop_FMpGM}, we deduce the statement.
\end{proof}

\begin{remark}
Note that a GM fourfold as in Proposition \ref{prop_FMptwistedcase} could have other Fourier--Mukai partners. Indeed, they could be obtained from Fourier--Mukai partners of $(S,\alpha)$ with order different from $r$.
\end{remark}

\begin{remark}
The construction of the rational map in \cite[Section~4]{BrakkeeTwistedK3} can be used in the case of cubic fourfolds to simplify the proof of \cite[Theorem 1.2]{Pert1}. More precisely, the rational map allows to skip the computation in \cite[Section 4.1]{Pert2}. As a consequence, it is possible to remove the assumption in \cite[Theorem 1.2]{Pert1} that $9$ does not divide the discriminant, giving a more complete statement.
\end{remark}

\bibliography{MarkedLabelledGM4}
\bibliographystyle{alpha}

Korteweg--de Vries Institute, University of Amsterdam, P.O. Box 94248, 1090 GE Amsterdam, Netherlands.\\
\indent E-mail address: \texttt{e.l.brakkee@uva.nl}\\
\indent URL: \texttt{https://staff.fnwi.uva.nl/e.l.brakkee/}\\ 

Dipartimento di Matematica ``F.\ Enriques'', Universit\`a degli Studi di Milano, Via Cesare Saldini 50, 20133 Milano, Italy. \\
\indent E-mail address: \texttt{laura.pertusi@unimi.it}\\
\indent URL: \texttt{http://www.mat.unimi.it/users/pertusi} 

\end{document}